\newtheorem{theorem}{Theorem}[section]
\newtheorem{lemma}[theorem]{Lemma}
\newtheorem{prop}[theorem]{Proposition}
\newtheorem{rem}[theorem]{Remark}
\newcommand{\supp}{\operatorname{supp}}
\newcommand{\mS}{\mathcal{S}}
\newcommand{\mA}{\mathcal{A}}
\newcommand{\mC}{\mathcal{C}}
\newcommand{\mR}{\mathcal{R}}
\newcommand{\mP}{\mathcal{P}}
\newcommand{\mD}{\mathcal{D}}
\newcommand{\Int}{\mbox{Int}}
\newcommand{\rN}{\mathbb{R}}
\newcommand{\mT}{\mathcal{T}}
\newcommand{\intl}{\int\limits}
\newcommand{\ma}{{\bf a}}
\newcommand{\mb}{{\bf b}}
\newcommand{\cT}{\mathbb{T}}
\newcommand{\mV}{\mathcal{V}}
\newcommand{\wf}{\mbox{WF}}
\newcommand{\eg}{\varepsilon}
\newcommand{\llg}{\lambda}
\newcommand{\Llg}{\Lambda}
\newcommand{\ve}{{\bf e}}
\newcommand{\vv}{{\bf v}}
\newcommand{\sg}{\sigma}
\newcommand{\Og}{\Omega}
\newcommand{\Ga}{\Gamma}
\newcommand{\pdh}{\partial}
\title{On Artifacts in Limited Data Spherical Radon Transform: Flat Observation Surfaces}
\author{Linh V. Nguyen}
\begin{document}

\maketitle

\begin{abstract} In this article, we characterize the strength of the reconstructed singularities and artifacts in a reconstruction formula for limited data spherical Radon transform. Namely, we assume that the data is only available on a closed subset $\Gamma$ of a hyperplane in $\mathbb{R}^n$ ($n=2,3$). We consider a reconstruction formula studied in some previous works, under the assumption that the data is only smoothened out to a finite order $k$ near the boundary. For the problem in the two dimensional space and $\Gamma$ is a line segment, the artifacts are generated by rotating a boundary singularity along a circle centered at an end point of $\Gamma$. We show that the artifacts are $k$ orders smoother than the original singularity. For the problem in the three dimensional space and $\Gamma$ is a rectangle, we describe that the artifacts are generated by rotating a boundary singularity around either a vertex or an edge of $\Gamma$. The artifacts obtained by a rotation around a vertex are $2k$ orders smoother than the original singularity. Meanwhile, the artifacts obtained by a rotation around an edge are $k$ orders smoother than the original singularity. For both two and three dimensional problems, the visible singularities are reconstructed with the correct order. We, therefore, successfully quantify the geometric results obtained recently by J. Frikel and T. Quinto.  
\end{abstract}

\section{Introduction}\label{S:intro} Let $\mS \subset \rN^n$ be the hyperplane $\mS=\{z= (0,z'): z' \in \rN^{n-1}\}.$
We consider the following (restricted) spherical Radon transform $\mR f$ of a function $f$ defined in $\rN^n$
$$\mR f (z, t) = \int\limits_{S(z,t)} f(y) \, d\sg(y),\quad (z,t) \in \mS \times (0,\infty).$$
Here, $S(z,t)$ is the sphere centered at $z$ of radius $t$, and $d\sg$ is its surface measure. This transform appears in several imaging modalities, such as \footnote{The reference list is highly incomplete. The interested reader is suggested to explore the literature for the comprehensive references to each imaging modality.} thermo/photoacoustic tomography \cite{FPR,FHR,KKun}, ultrasound imaging \cite{norton79,norton81}, SONAR \cite{QuintoSONAR}, SAR \cite{cheney00tomography,NoChe,SUSAR} and inverse elasticity  \cite{BuKar}. 

\medskip

In many applications, it is assumed that $f$ is compactly supported inside the half space $$\Og:=\rN^n_+=\{x: x_1>0\}.$$
Then, $f$ can be reconstructed from $g=\mR(f)$ by the filtered back projection formula (see, e.g., \cite{BuKar,NaRa,Beltukov})
$$f(x) = \frac{x_1}{\pi^n} (\mR^* \mP \mR f)(x).$$
Here, $\mR^*$ is the formal $L^2$-adjoint of $\mR$
$$\mR^*(g)(x)= \intl_{\mS} g(z,|x-z|) d\sg(z),$$
and $\mP$ is the pseudo-differential operator of order $(n-1)$ defined by
$$\mP(h)(r) = \intl_\rN \, \intl_{\rN_+} e^{i(\tau^2-r^2)\llg} \, |\llg|^{n-1} \, h(\tau) \, d\tau \, d\llg.$$

\medskip

However, in real applications, the knowledge of $\mR f$ is only available on a closed bounded subset $\Gamma \subset \mS$ with nontrivial interior (see, e.g., \cite{XWAK08,QuintoSONAR}).  The following formula is proposed for the approximate construction in several works (see, e.g., \cite{AMP,XWAK08})
$$ \mT f (x) := \frac{x_1}{\pi^n} (\, \mR^* \, \mP \chi \, \mR f)(x).$$
Here, $$\mbox{\bf $\chi \in C^\infty(\Ga)$ and $\chi \equiv 0$ on $\mS \setminus \Ga$ is the data cut-off function}.$$ 

\medskip

It has been commonly assumed that $\chi \in C^\infty(\mS)$. We prove in \cite{AMP} that, under this assumption, $\mT$ is a pseudo-differential operator with the principal symbol
$$\sg_0(x,\xi) = \chi(z),$$ where $z$ is the intersection of the line $\ell(x,\xi)$ through $x$ along direction of $\xi$ with the plane $\mS$. The assumption that $\chi$ is infinitely smooth is essential to apply the theory of pseudo-differential operator to analyze $\mT$. In particular, it implies $\wf(\mT f) \subset \wf(f)$. The multiplication with such function $\chi$ is considered as an infinite smoothening.  However, it is known to eliminate some singularities (image features) pointing near the boundary of $\Ga$ (see more discussion in \cite{FQ14} and reference therein). Therefore, one might consider to use other kind of smoothening (e.g., of finite order \footnote{We say that $\chi$ is smoothening of order $k$ if $\chi \in C^{k-1}$ across $\pdh \Ga$.}) or no smoothening at all \footnote{That is, $\chi \equiv 1$ on $\Ga$.}. However, these choices are shown to generate the artifacts into the pictures (e.g., \cite{halt08re,buehler2011model,patrickeyev2004removing}). In order for the practitioners to make the correct choice, it is important to analyze the effect of $\mT$ under these situations. The recent work by J. Frikel and T. Quinto \cite{FQ14} provides a nice geometric description for what happens\footnote{Their setting is a little bit different from ours. However, their results translate without major modifications.}. In particular, they show that the visible singularities are reconstructed and the artifacts would occur in some specific pattern (see more discussion below). Our main goal is to characterize the strength of the reconstructed singularities and, more importantly, the artifacts.

\medskip

It is worth mentioning that similar problem has been studied for the X-ray (or classical Radon) transform \cite{RamKat-AML1,RamKat-AML2,Kat-JMAA,FQ13,Streak-Artifacts}. Although this article shares the same spirit with our previous work \cite{Streak-Artifacts}, the technique developed here is original and different from there. Namely, in this article, for each type of singularities, we have to make a correct choice of the oscillatory representation of $\mT$ to work with. Moreover, the microlocal analytic arguments employed here are more sophisticated than there. 

\medskip

A different but related topic is to analyze the imaging scenarios when the associated canonical relation is not a local graph (see, e.g., \cite{MelroseTaylor,GrUDuke,Nolan-Symes,Nolan}, just to name a few). The pioneering work by Greenleaf and Uhlmann \cite{GrUDuke,GrUCon} employs the theory of class $I^{p,\ell}$ (see \cite{MU,GU,AnUhl}) to analyze such a situation appearing in the X-ray transform. The same technique has been exploited successfully in other situations (e.g., \cite{FLU,NoChe,FeleaCPDE,FeleaQuinto,Suresh,FeleaSAR,Am-singular}). Although a direct use of such an idea does not seem to work in our situation, our approach is inspired by the same spirit. Namely, we analyze the microlocal behavior of the associated FIO (or more precisely its Schwartz kernel) on its intersecting Lagrangians. 

\medskip

For the case $n=2$, using the argument as in \cite{FQ14}, we show that the artifacts are generated by rotating the boundary singularities \footnote{The definition of boundary singularities is in Section \ref{S:2dim}.} along circles whose center is an end point of $\Ga$ (see Proposition \ref{P:wavefront} and Discussion~1). Moreover, employing some asymptotic arguments,  we analyze the strength of these artifacts. Namely, assume that $\chi$ vanishes to order $k$ at the end points of $\Ga$ \footnote{That is, $\chi$ is smoothening of order $(k-1)$}. We prove that the artifacts are $k$ orders smoother than the original singularities  (see Theorem \ref{T:Main1} and Discussion~2).  We go further to analyze the problem in three dimensional space. We consider $\Ga$ to be the rectangle $\{0\} \times [-a,a] \times [-b,b]$ and $\chi(z) = h_2(z_2) h_3(z_3)$ vanishes on the edges of $\Ga$ to order $k$. We, using the arguments as in \cite{FQ14}, show that the singularities propagate by rotating around either a vertex or an edge of $\Ga$ (see Proposition \ref{P:wavefront-3} and Discussion~3). Moreover, employing delicate asymptotic and microlocal analytic arguments, we prove that the artifacts generated by a rotation around a vertex are $2k$ orders smoother than the original singularity, while those from a rotation around an edge are $k$ orders smoother than the original singularity (See Theorem \ref{T:Main2} and Discussion~4). Finally, we mention that, for both two and three dimensional problems, all the visible singularities \footnote{Definition of visible singularities is provided in Sections~\ref{S:2dim} and \ref{S:3dim}.} are reconstructed with the correct order. This follows from a similar argument for full data problem in \cite{AMP} (see Theorems \ref{T:Main1}~a) and \ref{T:Main2}~a)). 

\medskip

The article is organized as follows. In Section \ref{S:2dim} we consider the two dimensional problem. The three dimensional problem is studied in Section \ref{S:3dim}. Some concluding remarks are provided in Section~\ref{S:Re}. Finally, some asymptotics results  and background knowledge in microlocal analysis are provided in Appendix. 


\section{Two dimensional problem} \label{S:2dim}
Let us consider the problem in the two dimensional space $\rN^2$. Without loss of generalities, we assume that $\Gamma=\{(0,z_2): -1\leq z_2\leq 1\}$. 
As mentioned in the introduction, we will analyze $\mT$ when $\chi$ is not infinitely smooth at the points $z_{\pm}=(0,\pm 1)$. For the notational simplicity, we will assume that $\chi$ vanishes to the same order $k$ at $z_\pm$, although our analysis works equally well for the case $\chi$ vanishes to two different orders at these two points. 

\medskip

Let us define $\mV \subset \cT^* \Og \setminus 0$ by $$\mV= \{(x,\xi) \in \cT^* \Og \setminus 0: \ell(x,\xi) \mbox{ intersects } \Ga \}.$$
Its boundary $\pdh \mV$ and interior $\Int(\mV)$ are defined by 
$$\pdh \mV= \{(x,\xi) \in \cT^* \Og \setminus 0: \ell(x,\xi) \mbox{ intersects } \pdh(\Ga)\},$$
and 
$$\Int (\mV) = \{(x,\xi) \in \cT^* \Og \setminus 0: \ell(x,\xi) \mbox{ intersects } \Int(\Ga) \}.$$

\medskip

In the literature of spherical Radon transform, $\Int(\mV)$ is called the {\bf audible} or {\bf visible zone} since any singularity $(x,\xi) \in \wf(f) \bigcap \Int(\mV)$ creates a corresponding singularity in the limited data $g = \mR f|_{\Ga \times \rN_+}$ (see, e.g., \cite{louis00local}). We also call $\pdh \mV$ and $\cT^* \Og \setminus \mV$ {\bf boundary} and {\bf invisible} zones respectively. A singularity $(x,\xi) \in \wf(f)$ will be called visible, boundary, or invisible accordingly to the zone it belongs to.

\medskip
 
\noindent We define the following canonical relations in $(\cT^* \Og \setminus 0) \times (\cT^* \Og \setminus 0)$
$$\Delta_\mV = \{(x,\xi; x, \xi): (x,\xi) \in \mV\},$$
and
$$\Llg_\pm = \{\big(x,\tau(x-z_\pm);\, y,\tau(y-z_\pm) \big) \in (\cT^* \Og \setminus 0) \times (\cT^* \Og \setminus 0): |x-z_\pm| = |y-z_\pm| \}.$$
We notice that $(x,\xi;\, y, \eta) \in \Llg_\pm$ iff $(y,\eta)$ is in the boundary zone and $(x,\xi)$  is obtained from $(y,\eta)$ by a rotation around the corresponding boundary point $z_\pm$ (see Figure \ref{fig:zpm}). 

\begin{figure}
\begin{center}
    \includegraphics[width=0.2\textwidth]{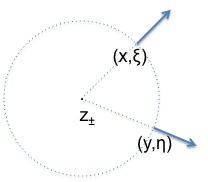}
 \end{center}
    \caption{Rotation around $z_\pm$}
    \label{fig:zpm}
\end{figure}

\medskip

\noindent The following result characterizes the wave front set of the Schwartz kernel $\mu$ of $\mT$ \footnote{We recall the twisted canonical $\mC'$ associated to $\mC$ is defined by $\mC'=\{(x,\xi; y, \eta): (x,\xi; y, -\eta) \in \mC\}$.} 
\begin{prop} \label{P:wavefront}
We have $$\wf(\mu)' \subset \Delta_\mV \cup \Llg_+ \cup \Llg_-.$$
\end{prop}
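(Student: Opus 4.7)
The plan is to represent $\mu$ as the pushforward of a product of two distributions and apply Hörmander's calculus for products and projections, reducing the claim to a geometric check on the critical set of a phase.

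Combining the definitions of $\mR$, $\mR^*$, and $\mP$, one finds, up to a smooth prefactor,
\[
\mu(x,y) = c_n\, x_1 \int_{-1}^{1} \chi(z)\, K(x,y,z)\, dz_2, \qquad K(x,y,z) := \int_0^\infty e^{i\lambda(|y-z|^2 - |x-z|^2)}\,\lambda^{n-1}\, d\lambda,
\]
with $z = (0, z_2)$, so that $\mu = p_*(\chi \otimes K)$, where $p: \Og\times\Og\times\mS \to \Og\times\Og$ is the projection. The distribution $K$ is Lagrangian with phase $\lambda \phi$, $\phi = |y-z|^2 - |x-z|^2$, and its wavefront is contained in
\[
\bigl\{\bigl(x,y,z;\,-2\lambda(x-z),\,2\lambda(y-z),\,2\lambda(x_2-y_2)\bigr) : |x-z| = |y-z|,\ \lambda \ne 0\bigr\}.
\]
Viewed on $\Og\times\Og\times\mS$, the cutoff $\chi$ has wavefront $\{(x,y,z_\pm;\,0,0,\zeta) : \zeta \ne 0\}$; since the $x$-covector $-2\lambda(x-z)$ in $\wf(K)$ is never zero on $\Og$, the product $\chi \otimes K$ is well-defined by Hörmander's theorem, and $p$ is proper on the support of $\chi$.

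The pushforward rule retains only wavefront points of $\chi \otimes K$ whose $z$-covector vanishes. From $\wf(K)$ alone, vanishing of the $z$-covector forces $x_2 = y_2$, which together with $|x-z|=|y-z|$ and $x_1, y_1 > 0$ yields $x = y$ and the fiber $(-2\lambda(x-z),\,2\lambda(x-z))$; the twist $\eta \mapsto -\eta$ turns this into $\Delta_\mV$. From the sum term $\wf(\chi\otimes 1) + \wf(K)$ of the product rule, a nonzero $\zeta$ at $z=z_\pm$ can cancel $2\lambda(x_2-y_2)$, so the surviving directions are $\{\xi = -2\lambda(x-z_\pm),\ \eta = 2\lambda(y-z_\pm) : |x-z_\pm| = |y-z_\pm|\}$, which after twisting lies in $\Llg_\pm$. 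The wavefront of $\chi \otimes 1$ alone has zero $(x,y)$-covectors and so does not survive the pushforward. Altogether this yields $\wf(\mu)' \subset \Delta_\mV \cup \Llg_+ \cup \Llg_-$.

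The main obstacle is ensuring the abstract product-and-pushforward calculus applies cleanly in our setting. The $\lambda$-integration defining $K$ is only conditionally convergent and must be interpreted as an oscillatory integral of order $n-1$; one needs to check that $K$ is genuinely a Lagrangian distribution with the wavefront set claimed, in particular along the degenerate locus $x_2 = y_2$ (where the $z$-component of the wavefront direction of $K$ vanishes). I would double-check the conclusion by a parallel direct argument on the oscillatory integral: microlocally partitioning according to whether $\partial_\lambda\Psi$, $\partial_{z_2}\Psi$, or the $C^{k-1}$ regularity of $\chi$ governs decay, integration by parts in the corresponding variable shows that the complement of $\Delta_\mV \cup \Llg_+ \cup \Llg_-$ contributes nothing to the wavefront of $\mu$.
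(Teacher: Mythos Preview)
Your argument is correct and uses the same microlocal machinery as the paper, but packages it somewhat differently. The paper keeps the intermediate variable $r$ and works with the Schwartz kernel $\mu_R$ of $\mR$ on $(z_2,r,x)$-space: it applies the product rule to $\chi\,\mu_R$, obtaining $\mC_R\cup\mC_A$, and then uses the \emph{composition} rule for $\mC_R^t\circ(\mC_R\cup\mC_A)$ to reach $\Delta_\mV\cup\Llg_+\cup\Llg_-$. You instead integrate out $r$ at the outset to form the Lagrangian kernel $K(x,y,z)$ on $\Og\times\Og\times\mS$, apply the product rule to $\chi\cdot K$, and then use the \emph{pushforward} rule in $z$. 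The two routes are equivalent in content---pushforward is composition with the projection kernel, and your condition ``$z$-covector vanishes'' is exactly the paper's computation $\mC_R^t\circ\mC_R=\Delta_\mV$---but your version is a little more direct, since it never introduces the $(z,r)$-space canonical relation $\mC_R$ or its transpose. Your closing caveat about interpreting the $\llg$-integral as an oscillatory integral is legitimate but routine: $K$ is a standard Lagrangian distribution with the wave front set you wrote, including at $x_2=y_2$, so no separate integration-by-parts argument is needed.
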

The proof of Proposition \ref{P:wavefront} follows from the standard calculus of wave front set (see, e.g., \cite{Ho1} and Appendix~\ref{A:Cal}). It was first presented in \cite{FQ14} for the case $\Ga$ is half a circle. The argument applies to our situation without any major modifications. We present it here for the sake of completeness (and to motivate further discussion).
\begin{proof}[\bf Proof of Proposition~\ref{P:wavefront}]
Let us denote by $\mu_R$ the Schwartz kernel of $\mR$. We notice that $\mR$ is an FIO with the phase function (see, e.g., \cite{louis00local,Pal-IPI,AMP})
$$\phi(x,z,r,\llg) = (|x-z|^2 - r^2) \llg.$$ For simplicity, we will identify $\mS$ with $\rN$ by the mapping
$$z=(0,z_2) \to z_2.$$
We have, considering $\mu_R$ as a function of $(z_2,r,x)$, 
$$\wf(\mu_R) \subset \mC_R: = \{\big((z_2,r), (\tau \, (z_2-x_2), - \tau \, r) ; \, x,\tau \, (x-z) \big): (z_2,r,x) \in \rN \times \rN_+ \times \Og:  |x-z|=r,\, \tau \neq 0\}.$$

\medskip

\noindent Also considering $\chi(z)$ as a function of $(z_2,r,x)$, we have
$$\wf(\chi) \subset \mC_\chi:= \{\big((z_2,r), \, (\tau', 0); \, x,{\bf 0}\big): z_2 = \pm 1, \, \tau' \neq 0\}.$$

\medskip

\noindent Applying the product rule for wave front set (see Appendix~\ref{A:Cal}), we obtain
$$\wf(\chi \mu_R) \subset \mC_R \cup  \mC_A ,$$
where
$$\mC_A = \{\big((z_2,r), \, (\tau \, (z_2-x_2)+ \tau', - \tau \, r); \, x,\tau \, (x-z)): z_2 = \pm 1, \, \tau' \neq 0\}.$$
Let $\mu^*_R$ be the Schwartz kernel for $\mR^*$. We have
$$\wf(\mu_R^*) \subset \mC_R^t.$$
We notice that  \footnote{$\mC^t$ is the transpose relation of $\mC$: $\mC^t=\{(y,\eta; x,\xi): (x,\xi; y, \eta) \in \mC\}.$}
\begin{eqnarray*} \mC_R^t \circ \mC_R &=& \Delta_\mV,\\[6 pt]
\mC_R^t \circ \mC_A &=&  \Llg_+ \cup \Llg_-.\end{eqnarray*}
Due to the composition rule for wave front set (see Appendix~\ref{A:Cal}), we conclude that \footnote{Since $\mP$ is a pseudo-differential operator, it does not change the wave front set of a function.}
$$\wf(\mu)' \subset \mC_R^t \circ (\mC_R \cup \mC_A)  \subset \Delta_\mV \cup \Llg_+ \cup \Llg_-.$$
\end{proof}

\medskip
\noindent {\bf Discussion 1.} Let us denote by $\pi_R$ the right projection of the product space $\cT^* \Og \times \cT^* \Og$ \footnote{That is $\pi_R(x,\xi; y, \eta) = (y,\eta)$.}. We have
$$\pi_R(\Delta_\mV)= \mV, \quad  \pi_R(\Llg_+ \cup \Llg_-)= \pdh \mV.$$
We obtain from Proposition \ref{P:wavefront}
$$\pi_R [\wf(\mu)']  \subset \mV.$$
We also recall the following rule for wave front sets (see Appendix~\ref{A:Cal}):
$$\wf(\mT f) \subset \wf(\mu)' \circ \wf(f).$$
The following effects of $\mT$ on the wave front set of $f$ can be deduced from Proposition \ref{P:wavefront} \footnote{The reader should be aware that $(x,\xi)$ in the below discussion  may play the role of $(y,\eta)$ in the definition of $\Llg_\pm$.}:
\begin{itemize}
\item[a)] If $(x,\xi) \in \wf(f)$ is an invisible singularity, then $(x,\xi) \not \in \pi_R(\wf(\mu)')$. Therefore, $(x,\xi)$ is completely smoothened out by $\mT$ (i.e., it is not reconstructed and it does not generate any artifact).

\item[b)] If $(x,\xi) \in \wf(f)$ is a visible singularity, then $(x,\xi) \in \pi_R(\Delta_\mV)$ and $(x,\xi) \not \in \pi_R(\Llg_+ \bigcup \Llg_-)$. Therefore, $(x,\xi)$ may be reconstructed but it does not generate any artifacts.

\item[c)] If $(x,\xi) \in \wf(f)$ is a boundary singularity, then $(x,\xi) \in \pi_R(\Llg_+)$ or $(x,\xi) \in \pi_R(\Llg_-)$. Therefore, $(x,\xi)$ may generate artifacts by rotating around $z_+$ or $z_-$, respectively. 

\end{itemize}

The above observation is similar to \cite[Remark 4.1]{FQ14}. They provide geometric descriptions for the reconstruction of original singularities and the generation of artifacts. We now proceed to obtain more quantitative results. Let us first recall our assumption on $\chi$: it vanishes to order $k$ at $z_\pm=(0,\pm 1)$. That is, we can write
\begin{equation} \label{E:chi2} \chi(0,\tau) = h(\tau),\quad \tau \in [-1,1],\end{equation} where $h \in C^\infty([-1,1])$ vanishes to order $k$ at $\tau=\pm1$ \footnote{That is $h^{(l)}(\pm 1) =0$ for all $0 \leq l \leq k-1$ and $h^{(k)}(\pm 1) \neq 0$.}. Here is our main result of this section:
\begin{theorem} \label{T:Main1} The following statements hold:
\begin{itemize}
\item[a)] Microlocally on $\Delta \setminus (\Llg_+ \cup \Llg_-)$, we have $\mu \in I^0(\Delta)$ with the principal symbol
$$\sg_0(x,\xi) = \chi(z),$$ where $z$ is the intersection of the line $\ell(x,\xi)$ with $\mS$.
\item[b)] Microlocally on $\Llg_\pm \setminus \Delta$, $\mu \in I^{-k-\frac{1}{2}}(\Llg_\pm).$ 
\end{itemize}
\end{theorem}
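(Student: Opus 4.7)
\textbf{Proof plan for \reftheo{T:Main1}.}

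For part (a), my plan is to localize microlocally away from the endpoints. Choose a smooth cutoff $\tilde\chi \in C^\infty_c(\Int(\Ga))$ that agrees with $\chi$ near the projection of the base point of interest. Then the kernel of $\mT-\mR^*\mP\tilde\chi\mR$ is governed by $\chi-\tilde\chi$, which is supported away from that projection, so by the argument of \refprop{P:wavefront} applied to this difference the kernel is microlocally smooth at the base point. Since $\tilde\chi$ is smooth on all of $\mS$, the full-data analysis of \cite{AMP} applies verbatim and shows that $\mR^*\mP\tilde\chi\mR$ is a classical $\Psi$DO of order $0$ with principal symbol $\tilde\chi(z(x,\xi))=\chi(z(x,\xi))$, where $z(x,\xi)=\ell(x,\xi)\cap\mS$. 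This yields part (a).

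For part (b), the starting point is the explicit oscillatory representation of the Schwartz kernel,
$$\mu(x,y)\;=\;\frac{x_1}{\pi^n}\int \chi(0,z_2)\, e^{i\llg[|y-z|^2-|x-z|^2]}\,|\llg|^{n-1}\,d\llg\,dz_2,\qquad z=(0,z_2),$$
obtained by composing the Fourier integral representations of $\mR$, $\mP$, $\mR^*$ and integrating out the intermediate radial variable. Using a partition of unity subordinate to a covering of $\Ga$, decompose $\chi=\chi_\infty+\chi_++\chi_-$, where $\chi_\infty$ is smooth and $\chi_\pm$ is supported in a small neighborhood of $z_\pm$. Microlocally on $\Llg_+\setminus\Delta$ the $\chi_\infty$-contribution belongs to $I^0(\Delta)$ by part (a) hence is smooth there, while the $\chi_-$-contribution has wave front only on $\Delta\cup\Llg_-$ (again by \refprop{P:wavefront}) and $\Llg_+\cap\Llg_-=\emptyset$ since any common point would force $x,y$ on the line through $z_\pm$, contradicting $x_1,y_1>0$. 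So everything reduces to the $\chi_+$-piece. By the hypothesis on $\chi$, $\chi_+(0,z_2)=(1-z_2)^k_+\,\tilde h(1-z_2)$ with $\tilde h\in C^\infty_c$ and $\tilde h(0)\neq 0$; changing variables $s=1-z_2$ and using
$$|y-z|^2-|x-z|^2\;=\;\Phi_+(x,y)+2(x_2-y_2)\,s,\qquad \Phi_+(x,y):=|y-z_+|^2-|x-z_+|^2,$$
we arrive at
$$\mu_+(x,y)\;=\;\frac{x_1}{\pi^n}\int e^{i\llg\,\Phi_+(x,y)}\,I(x,y,\llg)\,|\llg|^{n-1}\,d\llg,\quad I(x,y,\llg):=\int_0^\infty s^k\,\tilde h(s)\,e^{2i\llg(x_2-y_2)s}\,ds.$$

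The heart of the proof is the asymptotic analysis of $I$. On $\Llg_+\setminus\Delta$ one has $x_2\neq y_2$, since $|x-z_+|=|y-z_+|$ together with $x_2=y_2$ and $x_1,y_1>0$ forces $x=y$. Applying the one-sided stationary-phase expansion for boundary integrals $\int_0^\infty s^k\tilde h(s)e^{i\bg s}\,ds$ (the asymptotic lemma recorded in the appendix) yields a complete classical expansion $I(x,y,\llg)\sim\sum_{j\geq 0} c_j(x,y)\,\llg^{-k-1-j}$ with leading coefficient $c_0(x,y)=c\,\tilde h(0)(x_2-y_2)^{-k-1}\neq 0$, uniform in $(x,y)$ on compact subsets of $\{x_2\neq y_2\}$ and with the correct symbol estimates in all variables. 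Consequently $I(x,y,\llg)\,|\llg|^{n-1}$ is a classical symbol of order $n-k-2$ in $\llg$. Since $\phi(x,y,\llg):=\llg\,\Phi_+(x,y)$ is a clean non-degenerate phase parametrizing $\Llg_+$ ($d_x\Phi_+=-2(x-z_+)$ and $d_y\Phi_+=2(y-z_+)$ are non-zero on $\Og$), Hörmander's convention with $\dim X=2n$, $N=1$ gives
$$\mu_+\;\in\;I^{m}(\Llg_+),\qquad m\;=\;(n-k-2)-\frac{2n-2}{4}\;=\;-k-\tfrac12\quad\text{for }n=2.$$
The argument for $\Llg_-$ is identical.

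The main obstacle I anticipate is the upgrade from a pointwise asymptotic of $I$ to an honest classical symbol with uniform control of all $(x,y,\llg)$-derivatives on the relevant microlocal region; this is precisely the role of the appendix's asymptotic lemma. A secondary bookkeeping issue is verifying that the $\chi_\infty$ and $\chi_-$ pieces really are microlocally smooth on $\Llg_+\setminus\Delta$, which the disjointness argument above settles.
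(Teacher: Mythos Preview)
Your proposal is correct and follows essentially the same route as the paper. For part (b) the arguments are virtually identical: both decompose $\chi$ into pieces localized near the endpoints, rewrite the phase as $(|x-z_\pm|^2-|y-z_\pm|^2)\llg$, invoke the integration-by-parts asymptotic lemma of the appendix to show the inner integral is a classical symbol of order $-k-1$ on $\{x_2\neq y_2\}$, and read off the FIO order. Your additional three-piece decomposition $\chi=\chi_\infty+\chi_++\chi_-$ and the disjointness argument $\Llg_+\cap\Llg_-=\emptyset$ are harmless elaborations of the paper's two-piece splitting $h=h_++h_-$. For part (a) there is a mild stylistic difference: the paper works directly with the alternative oscillatory representation \eqref{E:mu-22} and observes that $\chi(z(x,\xi))$ is a smooth homogeneous symbol of degree $0$ away from $\pdh\mV$, whereas you reduce to the full-data result of \cite{AMP} via a smooth localization $\tilde\chi$; both arguments are short and yield the same conclusion.
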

\noindent The reader is referred to Appendix~\ref{A:FIO} for the class $I^m(\Llg_\pm)$ (see the discussion on the Fourier distributions whose canonical relation is defined by rotations around a point). We would like to mention that, as shown in the proof below, the order of $\mu$ stated in Theorem~\ref{T:Main1}~b) is optimal. We now proceed to prove Theorem~\ref{T:Main1}. 

\begin{proof}[\bf Proof of Theorem \ref{T:Main1}]  Let us first prove b). To this end, we write $\mu$ in the following form (see \cite{AMP})
$$\mu(x,y) = \frac{x_1}{\pi^2}\intl_{\rN} \intl_{\Ga}e^{i(|x-z|^2-|y-z|^2) \, \llg}\,|\llg| \, \chi(z) \, dz \, d\,\llg.$$
The phase function of $\mu$ can be written as
$$(|x-z|^2 -|y-z|^2) \llg = \left<x-y,x+y-2z \right> \llg.$$
Therefore,
\begin{eqnarray*} \mu(x,y) &=&  \frac{x_1}{\pi^2} \intl_{\rN} \intl_{\Ga}e^{i\left<x-y,x+y-2z\right> \llg}\,|\llg| \,\chi(z) \, dz \, d\,\llg\\
&=&  \frac{x_1}{\pi^2} \intl_{\rN} e^{i\left<x-y,x+y \right> \llg}\,|\llg| \, \intl_{\Ga}e^{- i\left<x-y,2z\right> \llg}\, \chi(z) \, dz \, d\,\llg. \end{eqnarray*}
We obtain, using the formula (\ref{E:chi2}),
\begin{eqnarray} \label{E:mu-21} \mu(x,y) &=&  \frac{x_1}{\pi^2}  \intl_{\rN} e^{i\left<x-y,x+y \right> \llg}\,|\llg| \, \intl_{-1}^1 e^{- 2 i(x_2-y_2) \, \tau \llg}\, h(\tau) \, d\tau \, d\,\llg
.\end{eqnarray}
Let us decompose $h$ into two parts $$h(\tau) = h_+(\tau) + h_-(\tau),\quad \tau \in [-1,1].$$ Here, $h_\pm \in C^\infty[-1,1]$ such that $h_\pm(\tau)$ vanishes when $\tau$ is close to $\mp 1$ (i.e., $h_\pm(\tau)=h(\tau)$ near $\tau = \pm 1$). We then write
$$\mu = \mu_+ + \mu_-,$$
where
\begin{eqnarray*} \mu_\pm(x,y)=  \frac{x_1}{\pi^2}  \intl_{\rN} e^{i\left<x-y,x+y \right> \llg}\,|\llg| \, \intl_{-1}^1 e^{- 2 i(x_2-y_2) \, \tau \llg}\, h_\pm(\tau) \, d\tau \, d\,\llg
.\end{eqnarray*}
An argument similar to the proof of Proposition \ref{P:wavefront} shows that $$\wf(\mu_\pm)' \subset \Delta \cup \Llg_\pm.$$
It now suffices to show that microlocally on $\Llg_\pm \setminus \Delta$, $\mu_\pm \in I^{-k-\frac{1}{2}}(\Llg_\pm)$. 

\medskip

Let us consider $\mu_+$. Due to Lemma \ref{L:sym} (see Appendix~\ref{A:Asym}):
\begin{eqnarray*}  \intl_{-1}^1 e^{- 2 i(x_2-y_2) \, \tau \llg}\, h_+(\tau) \, d\tau \, d\,\llg = - e^{ - 2 \, i \, (x_2-y_2) \, \llg} \,a(x,y,\llg)
\end{eqnarray*} where $a(x,y,\llg)$ is a classical symbol of order $-k-1$ when $x_2 \neq y_2$, with the leading term
\begin{equation} \label{E:ak} a_{-k-1}(x,y,\llg) = \frac{- h^{(k)}(1)}{[2\, i \,(x_2-y_2) \llg]^{(k +1)}}.\end{equation}
We arrive to,
\begin{eqnarray*} \mu_+(x,y)  &=&  \frac{x_1}{\pi^2}  \intl_{\rN} e^{i \big[\left<x-y,x+y \right>- 2 (x_2-y_2) \big] \llg} \,|\llg| \,a(x,y,\llg) \, d\,\llg
\\&=&  \frac{x_1}{\pi^2}  \intl_{\rN} e^{i (|x-z_+|^2 - |y-z_+|^2) \llg} \,|\llg| \, a(x,y,\llg) \, d\,\llg.\end{eqnarray*}

Let $(x^*,\xi; y^*, \eta) \in \Llg_+ \setminus \Delta$. In a small neighborhood of $(x^*,y^*) \in \Og \times \Og$, we have $x_2 \neq y_2$. Therefore, the amplitude function in the above integral is a symbol of order $-k$ near $(x^*,y^*)$. Therefore, microlocally near $(x^*,\xi; y^*, \eta)$, $\mu_+ \in I^{-k - \frac{1}{2}}(\Llg_+)$ (see Appendix~\ref{A:FIO} for the discussion on the Fourier distributions whose canonical relation is defined by rotations around a point). 

\medskip

This finishes the proof for $\mu_+$. The proof for $\mu_-$ is similar. We, hence, have finished the proof of b). We notice that the leading term $a_{-k-1}(x,y,\llg)$, given by (\ref{E:ak}), of $a(x,y,\llg)$ is nonzero. Therefore, the order $-k-\frac{1}{2}$ of $\mu_+$ on $\Llg_+$, obtained above, is optimal. That is, the order $-k-\frac{1}{2}$ of $\mu$ on $\Llg_\pm$ is also optimal.

\medskip

It now remains to prove a). We recall the following formula of $\mu$ derived in \cite{AMP}
\begin{eqnarray}\label{E:mu-22}
\mu(x,y) = \frac{1}{(2 \pi)^2} \intl_{\rN^2} e^{i \big(\left<x-y, \xi \right> - \, |x-y|^2 \frac{\xi_1}{2 \, x_1} \big)} \chi(z) \, d \xi. 
\end{eqnarray}

\noindent We note that $\sg_0(x,\xi) :=\chi(z)$ is smooth microlocally near any $(x,\xi)$ such that $(x,\xi;x,\xi) \in \Delta \setminus (\Llg_+ \cup \Llg_-)$. Moreover, it is homogenous of degree $0$ in $\xi$. Therefore, near $\Delta \setminus (\Llg_+ \cup \Llg_-)$, $\mu \in I^{0}(\Delta)$. Finally, the principal symbol of $\mu$ is indeed $\sg_0(x,\xi)$, due to \cite[Theorem 3.2.1]{Soggeb}. 
\end{proof}

\medskip

\noindent {\bf Discussion 2.} The following improvement of b) and c) in Discussion 1 are clear consequences of Theorem \ref{T:Main1} \footnote{As in Discussion 1, $(x,\xi)$ in below discussion may play the role of $(y,\eta)$ in the definition of $\Llg_\pm$.}: 
\begin{itemize}
\item[b')] Let $(x,\xi) \in \wf(f)$ be a visible singularity. Due to the Theorem~\ref{T:Main1}~a), $\mT f$ reconstructs $(x,\xi)$ as long as $\chi(z) \neq 0$. Moreover, the reconstructed singularity is of the same order as the original singularity.
\item[c')] Let $(x,\xi) \in \wf(f)$ be a boundary singularity. Due to Theorem~\ref{T:Main1}~b) and Lemma~\ref{L:x0}, the artifacts generated by $(x,\xi)$ are $k$ orders smoother than the original singularity. \footnote{For certain kind of conormal singularities, the artifacts are $k+\frac{1}{2}$ orders smoother than the original singularity. We will discuss this in details in our upcoming publication \cite{Artifact-Sphere-Curve}.}
\end{itemize}
Theorem \ref{T:Main1} provides a quantitative improvement for Proposition \ref{P:wavefront}. However, it still does not answer the following question: whether a boundary singularity is reconstructed and, if yes, how strong is the reconstruction? We plan to address this issue in the future.



\section{Three dimensional problem} \label{S:3dim}
Let us now consider $n=3$. We assume that $$\Ga =\{0\} \times [-a,a] \times [-b,b] =\{(0,z_2,z_3):(z_2,z_3) \in [-a,a] \times [-b,b] \},$$ where $a,b>0$. We will analyze $\mT$ when $\chi$ is not infinitely smooth at the boundary of $\Ga$. For the sake of simplicity, we will assume that
\begin{equation} \label{E:chi} \chi(z) = h_2(z_2) \, h_2(z_3), \quad \mbox{ for all } z= (0,z_2,z_3),\end{equation}
where $h_2$ and $h_3$ are smooth on $[-a,a]$ and $[-b,b]$ and they both vanish to order $k$ at $\pm a$ and $\pm b$ respectively. 

\medskip

\noindent Similarly to the case $n=2$, we define
$$\mV= \{(x,\xi) \in \cT^* \Og \setminus 0: \ell(x,\xi) \mbox{ intersects } \Ga \},$$
and
$$\Delta_\mV = \{(x,\xi; x, \xi): (x,\xi) \in \mV\}.$$
We also define the visible, boundary, and invisible zones by $\Int(\mV)$, $\pdh \mV$, and $\cT^* \Og \setminus \mV$ respectively. 
Let us denote by $\vv_j$, $j=1,\dots, 4$, the vertices of $\Ga$
$$\vv_1=(0,a,b), \quad \vv_2 = (0,a,-b), \quad \vv_3=(0,-a,b), \quad \vv_4 = (0,-a,-b),$$
and $\ve_j$, $j=5,\dots,8$, the edges of $\Ga$
\begin{eqnarray*}
\ve_5 &=& \{0\} \times[-a, a] \times \{b\} = \{z: z=(0,-a \leq z_2\leq a,b)\}, \\[6 pt]
\ve_6 &=&\{0\} \times [-a, a] \times \{-b\} =\{z:z=(0,-a\leq z_2 \leq a,-b)\} ,\\[6 pt]
\ve_7 &=& \{0\} \times\{-a\} \times [-b,b] = \{z: z=(0,-a,-b\leq z_3\leq b)\},\\[6 pt]
\ve_8 &=& \{0\} \times \{a\} \times [-b,b]=\{z: z=(0,a, -b\leq z_3 \leq b)\}.
\end{eqnarray*}
We will denote 
$$\mV_{\vv_j} = \{(x,\xi) \in \cT^* \Og \setminus 0: \ell(x,\xi) \mbox{ passes through the vertex }  \vv_j\}.$$
and 
$$\mV_{\ve_j} = \{(x,\xi) \in \cT^*\Og \setminus 0: \ell(x,\xi) \mbox{ intersects the edge $\ve_j$ of }  \Ga \mbox{ not at a vertex} \}.$$
We note that
$$\pdh \mV = \big(\bigcup \mV_{\vv_j} \big) \bigcup \big(\bigcup \mV_{\ve_j} \big) := \mV_c \bigcup \mV_e.$$
We will call $\mV_c$ and $\mV_e$ {\bf corner} and {\bf edge} zones respectively. Also, $(x,\xi) \in \wf(f)$ is called a corner or edge singularity accordingly to the zone it belongs to. A boundary singularity, hence, is either a corner or edge singularity.


\medskip

We denote by $\Llg_j$, $j=1,\dots,4$, the following canonical relation in $(\cT^* \Og \setminus 0) \times (\cT^* \Og \setminus 0)$
$$\Llg_j = \{(x,\tau(x-\vv_j); \, y,\tau(y-\vv_j)) \in (\cT^*\Og \setminus 0) \times (\cT^*\Og \setminus 0): |x-\vv_j| = |y-\vv_j| \},$$
and 
\begin{multline*}
\Llg_5 = \{(x, \tau \, (x-z); \, y, \tau \, (y-z)) \in (\cT^*\Og \setminus 0) \times (\cT^*\Og \setminus 0): z \in \ve_5,~ x_2=y_2,~ |x-z| = |y-z|\},
\end{multline*}
\begin{multline*}
\Llg_6 = \{(x, \tau \, (x-z); \, y, \tau \, (y-z)) \in (\cT^*\Og \setminus 0) \times (\cT^*\Og \setminus 0): z \in \ve_6,~ x_2=y_2,~ |x-z| = |y-z|\},
\end{multline*}
\begin{multline*}
\Llg_7 = \{(x, \tau \, (x-z);\, y, \tau \, (y-z)) \in (\cT^*\Og \setminus 0) \times (\cT^*\Og \setminus 0):z \in \ve_7,~ x_3=y_3,~ |x-z| = |y-z|\},
\end{multline*}
\begin{multline*}
\Llg_8 = \{(x, \tau \, (x-z); \, y, \tau \, (y-z)) \in (\cT^*\Og \setminus 0) \times (\cT^*\Og \setminus 0): z \in \ve_8,~ x_3=y_3,~ |x-z| = |y-z|\}.
\end{multline*}

\medskip

\noindent We notice that the canonical relation $\Llg_j$, {\bf $j=1,\dots,4$} is defined by the {\bf rotations around the vertex} $\vv_j$ for all the element $$(y,\eta) \in \mV_{\vv_j}.$$
On the other hand, the canonical relation $\Llg_j$, {\bf $j=5,\dots, 8$} is defined by the {\bf rotations around the edge} $\ve_j$ (see Figure \ref{fig:edge}) of $\Ga$ for all $$(y,\eta) \in \overline \mV_{\ve_j}.$$

\begin{figure}
\begin{center}
    \includegraphics[width=0.3\textwidth]{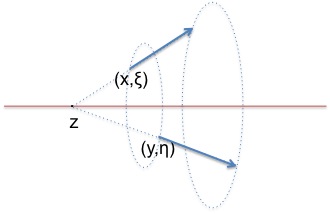}
 \end{center}
    \caption{Rotation around the edge (in red color).}
    \label{fig:edge}
\end{figure}

\medskip

\noindent The following result describes the wave front set of the Schwartz kernel $\mu$ of $\mT$

\begin{prop} \label{P:wavefront-3} We have
$$\wf(\mu)' \subset \Delta_\mV \, \bigcup \, \big(\bigcup_{j=1}^8 \Llg_j\big).$$
\end{prop}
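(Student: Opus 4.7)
The plan is to mimic the argument used for Proposition~\ref{P:wavefront} step by step, with the only extra work being the richer stratification of $\wf(\chi)$ caused by the two-dimensional boundary $\pdh \Ga$, which now splits into four edges and four corners.

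First I would record that $\mR$ is an FIO with phase $(|x-z|^2-r^2)\llg$ and, after identifying $\mS$ with $\rN^2$ via $(0,z_2,z_3)\mapsto (z_2,z_3)$, that the kernel $\mu_R$ (viewed as a distribution in $(z_2,z_3,r,x)$) has wave front set contained in
$$\mC_R=\bigl\{\bigl((z_2,z_3,r),\,(\tau(z_2-x_2),\tau(z_3-x_3),-\tau r);\,x,\tau(x-z)\bigr):|x-z|=r,\,\tau\neq 0\bigr\}.$$
This is the direct three-dimensional analogue of the $\mC_R$ used in the two-dimensional proof.

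Next I would compute $\wf(\chi)$ regarded as a function of $(z_2,z_3,r,x)$. Since $h_2$ is smooth off $z_2=\pm a$ and $h_3$ is smooth off $z_3=\pm b$, the one-dimensional product rule for wave front sets yields a clean decomposition $\wf(\chi)\subset \mC_\chi^{e}\cup \mC_\chi^{c}$, where the \emph{edge} part $\mC_\chi^{e}$, at the interior of each $\ve_j$ ($j=5,\dots,8$), carries a covector conormal to that edge in $(z_2,z_3)$ (for instance on $\ve_5$ it is $(0,\tau_3,0;\,{\bf 0})$, $\tau_3\neq 0$), while the \emph{corner} part $\mC_\chi^{c}$, located at each vertex $\vv_j$, carries every covector of the form $(\tau_2,\tau_3,0;\,{\bf 0})$ with $(\tau_2,\tau_3)\neq 0$. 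Applying the product rule for wave front sets then gives $\wf(\chi\mu_R)\subset \mC_R\cup \mC_A^{e}\cup \mC_A^{c}$, where $\mC_A^{e},\mC_A^{c}$ are the pointwise sums of $\mC_R$ with $\mC_\chi^{e}$ and $\mC_\chi^{c}$ respectively. Since $\mP$ is pseudodifferential the wave front of $\mu$ is then obtained by composing with $\mC_R^t$, and the identity $\mC_R^t\circ \mC_R=\Delta_\mV$ carries over verbatim from the two-dimensional proof.

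The substantive step is the two composition computations. At a corner, matching the summed covector $\bigl(\tau(\vv_{j,2}-x_2)+\tau_2,\,\tau(\vv_{j,3}-x_3)+\tau_3,\,-\tau r\bigr)$ with the $\mC_R^t$ template $\bigl(\tau'(\vv_{j,2}-y_2),\tau'(\vv_{j,3}-y_3),-\tau' r\bigr)$ forces $\tau=\tau'$, while $\tau_2,\tau_3$ freely absorb $\tau(x_2-y_2),\tau(x_3-y_3)$; combined with $|x-\vv_j|=r=|y-\vv_j|$ and $(\tau_2,\tau_3)\neq 0$ (which only removes the diagonal), this recovers exactly $\Llg_1,\dots,\Llg_4$. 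At the interior of an edge, say $\ve_5$, the covector addition is only in the $z_3$-slot, so matching with $\mC_R^t$ forces the additional equation $\tau(z_2-x_2)=\tau(z_2-y_2)$, i.e.\ $x_2=y_2$, whereas $\tau_3$ absorbs $\tau(x_3-y_3)$; together with $z\in \ve_5$ and $|x-z|=|y-z|$ this is precisely $\Llg_5$, and $\Llg_6,\Llg_7,\Llg_8$ follow by the same computation on the other edges.

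The main obstacle, modest but real, is the bookkeeping: separating the corner and edge strata of $\wf(\chi)$ correctly inside the product rule, and verifying that the constrained equations $x_j=y_j$ which define $\Llg_5,\dots,\Llg_8$ emerge automatically from the single-slot nature of the edge covector, while at the corners the two-dimensional $(\tau_2,\tau_3)$ freedom gives the full rotation $\Llg_j$ around the vertex. Once $\wf(\chi)$ is stratified correctly the rest is a direct import of the two-dimensional argument.
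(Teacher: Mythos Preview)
Your proposal is correct and follows essentially the same approach as the paper: compute $\wf(\chi)$ stratified into edge pieces (covectors conormal to each edge) and corner pieces (arbitrary nonzero covectors at each vertex), apply the product rule to obtain $\wf(\chi\mu_R)$, and then compose with $\mC_R^t$. The paper's own proof is in fact only a sketch---it records the sets $\mA_j$ (your $\mC_\chi^e$ and $\mC_\chi^c$) and then defers the composition calculus to \cite{FQ14} and Appendix~\ref{A:Cal}; your write-up actually carries out the matching explicitly and correctly identifies why the constraint $x_2=y_2$ (resp.\ $x_3=y_3$) appears for the edge relations while the vertex relations acquire no such constraint.
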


\begin{proof}
The proof is similar to that of Proposition \ref{P:wavefront-3}. The reader is referred to \cite{FQ14} for detailed argument. We only sketch here the main idea. 

\medskip

We first notice that we can consider $\chi$ as a function of $(z'=(z_2,z_3),r,x) \in \rN^2 \times \rN_+ \times \Og$. Then its wave front set is described by $\chi$ 
$$\wf(\chi) \subset \big(\bigcup_{j=1}^4 \mA_j\big) \bigcup  \big(\bigcup_{j=5}^8 \mA_j \big).$$
Here,
\begin{eqnarray*}
\mA_j &=&\{(z',r,\eta,0;\, x,{\bf 0}) \in \cT^*(\rN^2 \times \rN_+) \times \cT^* \Og: z=\vv_j,~ \eta =(\eta_2, \eta_3) \neq 0\},\quad j=1,\dots,4, \\[6 pt]
\mA_j &=& \{(z',r, \eta,0; \, x,{\bf 0} ) \in \cT^*(\rN^2 \times \rN_+) \times \cT^* \Og: z \in \ve_j,~{\bf 0} \neq (0,\eta) \perp \ve_j\}, \quad j=5,\dots, 8.
\end{eqnarray*}
The rest follows from the calculus of wave front set of product and composition (see Appendix~\ref{A:Cal}). 
\end{proof}

\begin{rem} \label{R:micro-localize} Assume that $$\big(x,\xi= \tau(x-z_*); y, \eta= \tau(y-z_*) \big) \in \Llg_j,$$ for some $j=5,\dots,8$ and $\chi(z)$ vanishes around the point $z=z_*$. We observe that the following elements in $\Llg_j$ $$\{(z'_*,r, \eta,0; \, x,{\bf 0} ) \in \cT^*(\rN^2 \times \rN_+) \times \cT^* \Og: ~{\bf 0} \neq (0,\eta) \perp \ve_j\}$$ are excluded from $\wf(\chi)$. Applying the same argument as in the proof of Proposition \ref{P:wavefront-3}, we obtain 
$$(x,\xi= \tau(x-z_*); y, \eta= \tau(y-z_*)) \not \in \wf(\mu)'.$$
This observation provides the micro-localization argument used  later in the proof of Theorem~\ref{T:Main2}. 
\end{rem}

\noindent {\bf Discussion 3.} Let us describe some implications of Proposition \ref{P:wavefront-3}. We denote by $\pi_R$ the right projection on the product space $\cT^* \Og \times \cT^* \Og$. Then,
$$\pi_R(\Delta_\mV)= \mV, \quad  \pi_R(\bigcup_{j=1}^4 \Llg_j)= \mV_c, \quad  \pi_R(\bigcup_{j=1}^4 \Llg_j)= \overline \mV_e.$$
We also recall the following rule for wave front sets (see Appendix~\ref{A:Cal}):
$$\wf(\mT f) \subset \wf(\mu)' \circ \wf(f).$$
The following arguments follow from Proposition \ref{P:wavefront-3} \footnote{The element $(x,\xi)$ in the below discussion may play the role of $(y,\eta)$ in the definition of $\Llg_j$.}: 
\begin{itemize}
\item[a)] If $(x,\xi) \in \wf(f)$ is an invisible singularity, then $(x,\xi) \not \in \pi_R(\wf(\mu)')$. Therefore, $(x,\xi)$ is completely smoothened out by $\mT$.

\item[b)] If $(x,\xi) \in \wf(f)$ is a visible singularity, then $(x,\xi) \in \pi_R(\Delta_\mV)$ and $(x,\xi) \not \in \bigcup_{j=1}^8 \Llg_j$. Therefore, $(x,\xi)$ may be reconstructed but it does not generate any artifact.

\item[c)] If $(x,\xi) \in \wf(f)$ is a corner singularity, then $(x,\xi) \in \pi_R(\Delta_\mV)$ and $(x,\xi) \in \pi_R(\Llg_j)$ for one index $j=1,\cdots,4$. Therefore, $(x,\xi)$ may be reconstructed, and it may also generate artifacts by rotating around the vertex $\vv_j$.

\item[d)] If $(x,\xi) \in \wf(f)$ is an edge singularity, then $(x,\xi) \in \pi_R(\Delta_\mV)$ and $(x,\xi) \in \pi_R(\Llg_j)$ for one index $j=5,\cdots,8$. Therefore, $(x,\xi)$ may be reconstructed and it may generate the artifacts by rotating around the edge $\ve_j$.

\end{itemize}
The above observation is similar (although presented in a slightly different way) to \cite[Remark 4.7]{FQ14}. The following theorem will provide quantitative improvement for b), c), and d):
\begin{theorem}  \label{T:Main2} Let $\chi$ be as in (\ref{E:chi}). The following statements hold
\begin{itemize}
\item[a)]Microlocally on $\Delta \setminus (\bigcup_{j=1}^8 \Llg_j)$, we have $\mu \in I^0(\Delta)$ with the principal symbol 
$$\sg_0(x,\xi) = \chi(z),$$ where $z$ is the intersection of the line $\ell(x,\xi)$ with $\mS$.
\item[b)] For $j=1,\dots, 4$, microlocally on $\Llg_j \setminus \bigcup_{m =5}^8 \Llg_{m}$, $\mu \in I^{-2 \, k-1}(\Llg_j)$. 
\item[c)] For $j=5,\dots, 8$, microlocally on $\Llg_j \setminus \bigcup_{m =1}^4 \Llg_{m}$, $\mu \in I^{-k-\frac{1}{2}}(\Llg_j).$  
\end{itemize}
\end{theorem}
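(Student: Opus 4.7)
\bigskip
\noindent\textbf{Proof proposal for Theorem \ref{T:Main2}.} The overall strategy is to exploit the tensor structure $\chi(z) = h_2(z_2)\, h_3(z_3)$, which allows Lemma~\ref{L:sym} to be applied separately in the $z_2$ and $z_3$ directions. Part (a) should mirror the argument for Theorem~\ref{T:Main1}(a): on $\Delta \setminus \bigcup_j \Llg_j$, the function $\chi(z)$, where $z$ is the intersection of $\ell(x,\xi)$ with $\mS$, is smooth and homogeneous of degree $0$ in $\xi$, so the three-dimensional analog of (\ref{E:mu-22}) gives $\mu \in I^{0}(\Delta)$ with principal symbol $\chi(z)$, again via \cite[Theorem~3.2.1]{Soggeb}.

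For part (b), I would focus on the vertex $\vv_1 = (0,a,b)$. Starting from
$$\mu(x,y) = \frac{x_1}{\pi^{3}} \intl_\rN \intl_\Ga e^{i\left<x-y,\, x+y - 2z\right>\llg} \, |\llg|^{2} \, h_2(z_2) \, h_3(z_3) \, dz\, d\llg,$$
decompose $h_2 = h_{2,+} + h_{2,-}$ and $h_3 = h_{3,+} + h_{3,-}$ as in the 2D proof and work with the $(+,+)$ piece. Applying Lemma~\ref{L:sym} to both the $z_2$ and the $z_3$ integrals produces classical symbols $a_2, a_3$ of order $-k-1$ in $\llg$, together with phase shifts $-2a(x_2-y_2)\llg$ and $-2b(x_3-y_3)\llg$. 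These combine into $(|x-\vv_1|^2 - |y-\vv_1|^2)\llg$ since
$$|x-\vv_1|^2 - |y-\vv_1|^2 = \langle x-y,\, x+y\rangle - 2a(x_2-y_2) - 2b(x_3-y_3).$$
The total amplitude is then of order $2 + (-k-1) + (-k-1) = -2k$ in $\llg$, and with one phase variable ($N=1$) and $n=n'=3$, the FIO order formula gives $-2k + 1/2 - 6/4 = -2k - 1$. Non-vanishing of the two leading terms, as in the two-dimensional optimality remark, also confirms that this order is sharp.

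For part (c), consider the edge $\ve_5$. Here only the $z_3$ endpoint at $b$ is singular; the $z_2$ integration traverses the edge itself. Decompose $h_3 = h_{3,+} + h_{3,-}$ and apply Lemma~\ref{L:sym} only to the $z_3$ integral, obtaining a classical symbol $a_3(x,y,\llg)$ of order $-k-1$ and the phase shift $-2b(x_3-y_3)\llg$. Using Remark~\ref{R:micro-localize}, microlocalize $h_2$ near a chosen $z_2^*\in(-a,a)$ so that the vertex contributions from $\bigcup_{m=1}^4 \Llg_m$ are excluded. The remaining phase
$$\Phi(x,y,z_2,\llg) = \bigl[\langle x-y,\, x+y\rangle - 2z_2(x_2-y_2) - 2b(x_3-y_3)\bigr]\llg$$
is homogeneous of degree $1$ only in $\llg$, not jointly in $(z_2,\llg)$. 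The crucial step is the change of variables $\mu = z_2\, \llg$, after which $\Phi$ becomes homogeneous of degree $1$ in the phase variables $(\llg,\mu)\in\rN^2$, and the amplitude becomes $\llg\, h_2(\mu/\llg)\, a_3(x,y,\llg)$. On the cone $\{|\mu|\le a|\llg|\}$ this amplitude is a classical symbol of order $1+0+(-k-1)=-k$ in $(\llg,\mu)$, so with $N=2$ and $n=n'=3$ the FIO order formula yields $-k + 1 - 3/2 = -k - 1/2$. A short computation verifies that the critical set of $\Phi$ in $(\llg,\mu)$ parametrizes $\Llg_5$ correctly, and the non-vanishing leading term of $a_3$ gives optimality.

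The main obstacle will be the passage to standard FIO form in part (c). One has to justify that after the rescaling $\mu = z_2\llg$, the cut-off factor $h_2(\mu/\llg)$ is a legitimate degree-$0$ classical symbol on the relevant cone, which is precisely where the microlocalization from Remark~\ref{R:micro-localize} is needed to keep $\mu/\llg$ away from the singular boundary values $\pm a$. Up to this technical point, parts (b) and (c) differ only in how many copies of Lemma~\ref{L:sym} contribute, which is what accounts for the doubling of the smoothing gain at vertices ($2k$) compared to edges ($k$).
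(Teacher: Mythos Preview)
Your proposal is correct and follows essentially the same route as the paper's proof: part (a) via the three-dimensional analog of \eqref{E:mu-22}, part (b) by applying Lemma~\ref{L:sym} in both $z_2$ and $z_3$ to obtain a one-phase-variable oscillatory integral with amplitude of order $-2k$, and part (c) by applying Lemma~\ref{L:sym} only in $z_3$, microlocalizing $h_2$ via Remark~\ref{R:micro-localize}, and then rescaling the remaining edge variable by $\lambda$ to get a two-phase-variable representation. The only cosmetic difference is that the paper uses the change of variables $\xi_2=(x_2-s)\lambda$ instead of your $\mu=z_2\lambda$; these differ by an $x$-dependent linear shift and yield the same phase and symbol orders, and the paper handles the ``legitimate symbol'' issue you flag exactly as you anticipate, by noting that the compact support of the localized $h_2$ forces the amplitude to vanish on the cone $|\xi_2|\ge C|\xi_1|$.
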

The reader is referred to Appendix~\ref{A:FIO} for basic facts about the space $I^m(\Llg_j)$. It is worth mentioning that, as shown in the proof below, the orders of $\mu$ on $\Llg_j$ stated in Theorem~\ref{T:Main2}~b)~\&~c) are optimal. We now proceed to prove Theorem~\ref{T:Main2}. 

\begin{proof}[{\bf Proof of Theorem~\ref{T:Main2}}] 
We will start our analysis with the following formula (see \cite{AMP})
\begin{equation} \label{E:mu-3} \mu(x,y) = \frac{x_1}{\pi^3}\intl_{\rN} \intl_{\Ga}e^{i(|x-z|^2-|y-z|^2) \, \llg}\,\llg^2 \, \chi(z) \, dz \, d\,\llg.\end{equation}

\medskip

\noindent{\bf Proof of a).} To prove a), we recall that the formula \eqref{E:mu-3} can be written as (see \cite{AMP})
\begin{eqnarray}\label{E:mu-33}
\mu(x,y) = \frac{1}{(2 \pi)^3} \intl_{\rN^3} e^{i \big(\left<x-y, \xi \right> - \, |x-y|^2 \frac{\xi_1}{2 \, x_1} \big)} \chi(z) \, d \xi. 
\end{eqnarray}

\noindent Here, as always, $z$ is the intersection of $\ell(x,\xi)$ and $\mS$. We note that $\sg_0(x,\xi) :=\chi(z)$ is smooth microlocally near any $(x,\xi)$ such that $(x,\xi;x,\xi) \in \Delta \setminus \bigcup_{j=1}^8 \Llg_j$. Moreover, it is homogenous of degree $0$ in $\xi$. Therefore, near $\Delta \setminus \bigcup_{j=1}^8 \Llg_j$,  $\mu \in I^{0}(\Delta)$. Moroever, the principal symbol of $\mu$ is $\sg_0(x,\xi)= \chi(z)$ (see, e.g., \cite[Theorem 3.2.1]{Soggeb}). 

\medskip

\noindent{\bf Proof of b).} Consider $j=1,\dots,4$ and $$(x^*, \xi;\, y^*, \eta) \in \Llg_j \setminus \bigcup_{m=5}^8 \Llg_m.$$ 
We will analyze $\mu$ microlocally near $(x^*,\xi; \, y^*,\eta)$.

\medskip

\noindent Let us rewrite \eqref{E:mu-3} as 
\begin{eqnarray*}  \mu(x,y) = \frac{x_1}{\pi^3} \intl_{\rN} \intl_{\Ga}e^{i\left<x-y,x+y-2z\right> \llg}\,\llg^2 \,  \chi(z) \, dz \, d\,\llg.\end{eqnarray*}
Therefore,
\begin{eqnarray*} \mu(x,y) = \frac{x_1}{\pi^3} \intl_{\rN} e^{i\left<x-y,x+y \right> \llg} \,\llg^2 \, \intl_{\Ga}e^{- i\left<x-y,2z\right> \llg}\, \chi(z) \, dz \, d\,\llg. \\
\end{eqnarray*}
That is, using the formula (\ref{E:chi}) of $\chi$, 
\begin{multline}\label{E:mu3-bisbis} \mu(x,y) = \frac{x_1}{\pi^3} \intl_{\rN} e^{i\left<x-y,x+y \right> \llg}\,\llg^2 \, \Big( \intl_{-a}^a e^{- 2 i(x_2-y_2) z_2 \llg}\, h_2(z_2) \, dz_2 \Big) \\  \times  \Big(\intl_{-b}^b e^{- 2 i(x_3-y_3) \, z_3  \llg} \,h_3(z_3) \, dz_3 \Big) \, d\,\llg.\end{multline}

\medskip

\noindent On the set $x_2 \neq y_2$, due to Lemma~\ref{L:sym},
\begin{eqnarray} \label{E:x2noty2}
 \intl_{-a}^a e^{- 2 i(x_2-y_2) z_2 \llg}\, h_2(z_2) \, dz_2 =   e^{- 2 i(x_2-y_2) a \llg}\, a_{1,+}(x,y,\llg)  +  e^{- 2 i(x_2-y_2) ) (-a)  \llg}\,a_{1,-}(x,y,\llg),
 \end{eqnarray}
where $a_{1,\pm}$ are classical symbol of order $-k-1$, whose the leading terms are $\frac{-h^{(k)}(a)}{[2\, i \, (x_2-y_2) \, \llg]^{k+1}}$ and $\frac{h^{(k)}(-a)}{[2\, i \, (x_2-y_2) \, \llg]^{k+1}}$, respectively.

\medskip

\noindent Similarly, on the set $x_3 \neq y_3$, we obtain
 \begin{eqnarray}\label{E:x3noty3} 
\intl_{-b}^b e^{- 2 i(x_3-y_3) \, z_3  \llg} \,h_3(z_3) \, dz_3= e^{- 2 i(x_3-y_3) b \llg} a_{2,+}(x,y,\llg) +  \, e^{- 2 i(x_3-y_3) ) (-b)  \llg}\, a_{2,-}(x,y,\llg), \end{eqnarray}
where $a_{2,\pm}$ are classical symbol of order $-k-1$, whose leading terms are $\frac{-h^{(k)}(b)}{[2\, i \, (x_3-y_3) \, \llg]^{k+1}}$ and  $\frac{h^{(k)}(-b)}{[2\, i \, (x_3-y_3) \, \llg]^{k+1}}$, respectively. 
\medskip

It is easy to observe that in a neighborhood of $(x^*,y^*)$, we have $x_2 \neq y_2$ and $x_3 \neq y_3$. Therefore, near $(x^*,y^*)$, due to \eqref{E:mu3-bisbis}, \eqref{E:x2noty2}, and \eqref{E:x3noty3}
\begin{equation*} \mu(x,y)  = \sum_{\eg_1 = \pm 1, \eg_2 = \pm 1} \intl_{\rN} e^{i\big[\left<x-y,x+y \right> - 2 (x_2 - y_2) (\eg_1 a) + (x_3-y_3) ( \eg_2 b)\big] \llg} a_{\eg_1, \eg_2}(x,y,\llg) \,  d\,\llg,
\end{equation*}
where $a_{\eg_1,\eg_2}$ is a classical symbol of order $-2k$, with nonzero leading term. That is,
\begin{eqnarray*} \mu(x,y)  &=& \sum_{m=1}^4 \mu_m.
\end{eqnarray*}
Here, for $m=1,\dots,4$:
\begin{eqnarray}\label{E:muk} \mu_m(x,y)  = \intl_{\rN} e^{i (|x-\vv_m|^2 - |y-\vv_m|^2) \llg} \, a_m(x,y, \llg)   d\,\llg,\end{eqnarray} where $a_m$ is a classical symbol of order $-2k$, with nonzero leading term \footnote{We can easily write down the leading term of $a_m$ in terms of $h_2^{(k)} \otimes h_3^{(k)}(\vv_m')$. However, such formula is not essential for our argument}. 
\medskip

From the standard theory of FIO (see also Appendix~\ref{A:FIO}), we obtain near $(x^*,y^*)$
$$\wf(\mu_m) \subset \Llg_m, \quad \mbox{ for all } m=1,\dots, 4.$$
Due to the assumption $(x^*, \xi; y^*, \eta) \in \Llg_j$, $(x^*, \xi; y^*, \eta) \not \in \wf(\mu_m)$ for $m \neq j$. Therefore, microlocally near $(x^*,\xi; y^*, \eta)$, $$\mu = \mu_j  \mbox{ modulo a smooth function}.$$  Due to \eqref{E:muk} (see also Appendix~\ref{A:FIO}), $\mu_j \in I^{-2k-1}(\Llg_j)$ microlocally near $(x^*,\xi; y^*, \eta)$, and so is $\mu$. This finishes the proof for b). We notice that, since the leading term of $a_j$ is nonzero, the order $-2k-1$ of $\mu$ on $\Llg_j$ is optimal. 

\medskip

\noindent {\bf Proof of c).} We now prove c) for $j=5,6$. The case $j=7,8$ is similar. Let us consider  $$(x^*,\xi; y^*, \eta) \in \Llg_j \setminus \bigcup_{k = 1}^4 \Llg_{m}.$$ Then, there is $z^*=(0,-a<s_0<a,\pm b)$ such that
$$\xi = \llg \, (x^* - z^*),~\eta= \llg \, (y^* - z^*).$$
Let us pick a function $g \in C^\infty_0(-a,a)$ such that $g(s) = h_2(s)$ around the point $s=s_0$. We then can write
$$\chi(z) = g(z_2) h_3(z_3) + (h_2(z_2)-g(z_2)) h_3(z_3) := \chi_1(z) + \chi_2(z).$$
Therefore, 
$$\mu = \mu_1 + \mu_2,$$ where $\mu_1,\mu_2$ are respectively the Schwartz kernel of 
$$\mT_1=  \frac{x_1}{\pi^3}\mR^*\mP  \chi_1  \mR \mbox{ and } \mT_2 := \frac{x_1}{\pi^3}\mR^* \mP  \chi_2 \mR.$$
Using Remark~\ref{R:micro-localize}, we obtain (since $\chi_2$ vanishes near $z^*$) $$(x^*,\xi; y^*, \eta) \not \in \wf(\mu_2)'.$$
Therefore, in order to analyze $\mu$ microlocally at $(x^*,\xi; y^*, \eta)$, it suffices to analyze $\mu_1$. 
Similarly to \eqref{E:mu3-bisbis}, we can write
\begin{eqnarray*} \mu_1(x,y) &=& \frac{x_1}{\pi^3} \intl_{\rN} e^{i\left<x-y,x+y \right> \llg} \,\llg^2 \, \intl_{\Ga}e^{- i\left<x-y,2z\right> \llg}\, \chi_1(z) \, dz \, d\,\llg \\ &=&  \frac{x_1}{\pi^3} \intl_{\rN} e^{i\left<x-y,x+y \right> \llg}\,\llg^2 \, \Big( \intl_{-a}^a e^{- 2 i(x_2-y_2) z_2 \llg}\, g(z_2) \, dz_2 \Big) \, \Big(\intl_{-b}^b e^{- 2 i(x_3-y_3) \, z_3  \llg} \,h_3(z_3) \, dz_3 \Big) \, d\,\llg.\end{eqnarray*}
We observe that in a neighborhood of $(x^*, y^*)$, we have $x_3 \neq y_3$. Using \eqref{E:x3noty3}, we obtain locally near $(x^*,y^*)$ 
$$\mu_1 = \gamma_+ + \gamma_-,$$
where
\begin{eqnarray*} \gamma_\pm (x,y) &=&  \intl_{\rN} e^{i\big(\left<x-y,x+y  \right> -2(x_3 - y_3)(\pm b)  \big) \llg} \,\Big( \intl_{-a}^a e^{- 2 \, i \, (x_2-y_2)\, s \, \llg}\, g(s) \, ds \Big) \, a_\pm(x,y,\llg) \,  d\,\llg ,
\end{eqnarray*} and $a_\pm$ is a classical symbol of order $1-k$, with nonzero leading term. 
Let us denote $$x''=(x_1,x_3), \quad y''=(y_1,y_3), \quad z''_{\pm}=(0,\pm b).$$ Then,
\begin{eqnarray*} \gamma_\pm (x,y) &=&  \intl_{\rN} e^{i\big(\left<x''-y'',x''+y''-2z''_\pm \right>+(x_2^2-y_2^2) \big) \llg} \,\Big( \intl_{-a}^a e^{- 2 \, i \, (x_2-y_2)\, s \, \llg}\, g(s) \, ds \Big) \, a_\pm (x,y,\llg) \,  d\,\llg.
\end{eqnarray*}
That is, 
\begin{eqnarray*} \gamma_\pm(x,y) &=& \intl_{-a}^a  \intl_{\rN} e^{i \big[\left<x''-y'',x''+y''-2z''_\pm\right> \, \llg +(x_2 -y_2) (x_2+y_2 - 2 s) \, \llg \big] }  \, a_\pm(x,y,\llg) \,   g(s)\, d\,\llg   \, ds\\ &=& \intl_{-a}^a  \intl_{\rN} e^{i \big[( |x''-z''_\pm|^2 - |y''-z''_\pm|^2)\, \llg +2 (x_2 -y_2) (x_2-  s) \, \llg - (x_2-y_2)^2 \, \llg \big] }  \, a_\pm(x,y,\llg) \,   g(s) \,  d\,\llg   \, ds .\end{eqnarray*}
Let us consider the change of variables $$(\llg, s) \to \xi = (\llg, (x_2-s) \llg),$$ whose Jacobian is $$J(\llg, s) = - \llg.$$
We then arrive to
\begin{eqnarray*}\gamma_\pm(x,y) =  \intl_{\rN^2} e^{i \big[( |x''-z''_\pm|^2 - |y''-z''_\pm|^2)\, \xi_1 +2 (x_2 -y_2) \xi_2 - (x_2-y_2)^2 \, \xi_1 \big] }  \, \frac{a_\pm(x,y,\xi_1)}{|\xi_1|} \,   \sg(x,\xi) \, d\xi,\end{eqnarray*}
where $$\sg(x,\xi) = g\big(x_2 -\frac{\xi_2}{\xi_1})$$ is homogenous of order zero in $\xi$.

\medskip

\noindent Since $\supp g \subset (-b,b)$, $\sg(x,\xi)\equiv 0$ in a neighborhood of $(x^*,y^*)$ if $|\xi_2| \geq C|\xi_1|$ for some fixed constant $C$. Therefore, 
$$\frac{a_\pm(x,y,\xi_1)}{|\xi_1|} \,   \sg(x,\xi) \mbox{ is a classical symbol of order $-k$ around }(x^*,y^*).$$ 
Applying the standard theory of FIO (see also Appendix~\ref{A:FIO}), we obtain that in a neighborhood of $(x^*,y^*)$, $$\gamma_1 \in I^{-k-1/2}(\Llg_5) \mbox{ and } \gamma_2 \in I^{-k-1/2}(\Llg_6). $$

\medskip

\noindent Therefore, $\mu_1 = \gamma_1 + \gamma_2 \in I^{-k-\frac{1}{2}}(\Llg_j)$ microlocally near $(x^*,\xi; y^*, \eta)$, and hence so is $\mu$. This finishes the proof of the Theorem \ref{T:Main2}~c). We finally notice that, since the leading term of $a_\pm$ is nonzero, the order $-k-\frac{1}{2}$ of $\mu$ on $\Llg_j$ is optimal.
\end{proof}


\noindent {\bf Discussion 4.} The following improvement  of statements b), c), and d) in Discussion 3 are clear consequences of Theorem \ref{T:Main2} (and Lemmas~\ref{L:x0},~\ref{L:ma}) \footnote{As in Discussion 3, the element $(x,\xi)$ in the below discussion may play the role of $(y,\eta)$ in the definition of $\Llg_j$.}:

\begin{itemize}

\item[b')] Let $(x,\xi) \in \wf(f)$ be a visible singularity. Then, due to Theorem~\ref{T:Main2}~a), $(x,\xi)$ is reconstructed by $\mT$ as long as $\sg_0(x,\xi) =\chi(z) \neq 0$. Moreover, the reconstructed singularity is of the same order as the original singularity.

\item[c')] Let $(x,\xi) \in \wf(f) \cap \mV_{\vv_j}$ be a corner singularity. Due to Theorem~\ref{T:Main2}~b) and Lemma~\ref{L:x0}, an artifact at $(y,\eta) \neq (x,\xi)$, generated by $(x,\xi)$, is $2k$ orders smoother than the original singularity, if  $(y,\eta)$ cannot be obtained from $(x,\xi)$ by a rotation around an edge.

\item[d')] Let $(x,\xi) \in \wf(f) \cap \mV_{\ve_j}$ be an edge singularity. Due to Theorem~\ref{T:Main2}~c) and Lemma~\ref{L:ma}, the artifacts it generates are $k$ orders smoother than itself. 

\end{itemize}
We, however, still cannot describe the reconstructions of the boundary singularities (as in two dimensional problem). Moreover, it is not clear how strong the artifact is if it is obtained from an original singularity by both a rotation around an edge and a vertex. These issues are left to a future research.

\section{Concluding remarks} \label{S:Re}
It is clear that our results can be generalized to any $n$ dimensional space, where $\Gamma$ is a generalized rectangle in $\rN^{n-1}$. 

\medskip

\noindent We point out here several possible research directions that may grow out from this article
\begin{itemize}
\item[1)] In this article, we assume that  the observation surface $\Ga$ is flat. More precisely, we assume $$\Ga=\{0\} \times  [-1,1],\quad  \mbox{ for } n=2,$$ and $$\Ga=\{0\} \times [-a,a] \times [-b,b], \quad \mbox{ for } n =3.$$ 
It is natural to ask what happens if $\Ga$ has other shapes? This issue will be addressed in our up coming work \cite{Artifact-Sphere-Curve}.

\item[2)] Another direction of study is the investigate the full data problem when the observation surface $\mS$ is not smooth (although enclosing the support of $f$). This has practical application, since the rectangular geometry has been common used in practice.

\item[3)] The method developed in this article can be used to study the limited data problems for other integral transforms, such that elliptical and cylindrical transforms. 
\end{itemize}

Finally, it is also worth mentioning again that questions raised in Discussions~2 and 4 are also interesting issues to investigate. 

\section*{Acknowledgement}
The author is grateful to Professor T. Quinto and Dr J. Frikel for kindly communicating their results in \cite{FQ14}, which helps to speed up this work. He is also thankful to Professor G. Uhlmann for introducing him to the theory of pseudo-differential operators with singular symbols, whose spirit inspires this and the previous work \cite{Streak-Artifacts}. 

\appendix
\section{Appendix}
The appendix consists of three parts. Appendix~\ref{A:Asym} is dedicated to some asymptotics results needed for the proof of Theorems~\ref{T:Main1} and \ref{T:Main2}. In Appendix~\ref{A:Cal}, we state some basic rules for calculus of wave front set which are used through out the article. Finally, two types of Fourier distributions are introduced in Appendix~\ref{A:FIO}. The first type has the canonical relation defined by the rotations around a point, whereas the second type defined by rotations around a line. 
\subsection{Asymptotics of an oscillatory integral} \label{A:Asym}
The following asymptotic results are used in the proof of Theorems~\ref{T:Main1} and \ref{T:Main2}.
\begin{lemma}\label{L:sym}  Let $h \in C^\infty([c,d])$, we define
\begin{eqnarray*} A (s,t,\llg) = \intl_{c}^d e^{- 2 \, i \, (s-t) \, \tau\, \llg}\, h(\tau) \, d\tau.\end{eqnarray*}
Then, $A \in C^\infty(\rN \times \rN \times \rN)$. Moreover, 
\begin{itemize}
\item[i)] If $\supp(h) \subset [c,d)$ and $h$ vanishes to order $k$ at $\tau =c$, then
\begin{eqnarray*} A (s,t,\llg) = e^{ - 2 \, i \, (x_2-y_2) \,c \, \llg} \, a(s,t,\llg),
\end{eqnarray*}
where $a(s,t,\llg)$ is a classical symbol of order $-k-1$ when $s \neq t$, with the leading term $$a_{-k-1}(s,t,\llg) = \frac{h^{(k)}(c)}{[2\, i \,(s-t) \llg]^{(k +1)}}.$$

\item[ii)] If $\supp(h) \subset (c,d]$ and $h$ vanishes to order $k$ at $\tau =d$, then
\begin{eqnarray*} A (s,t,\llg) = e^{ - 2 \, i \, (s-t) \,d \, \llg} \, a(s,t,\llg),
\end{eqnarray*}
where $a(s,t,\llg)$ is a classical symbol of order $-k-1$ when $s \neq t$, with the leading term $$a_{-k-1}(s,t,\llg) = \frac{- h^{(k)}(d)}{[2\, i \,(s-t) \llg]^{(k +1)}}.$$
\item[iii)] If $h$ vanishes to order $k$ at $\tau=c,d$, then
\begin{eqnarray*}A (s,t,\llg) = e^{ - 2 \, i \, (x_2-y_2) \,c \, \llg} a_c(s,t,\llg) + e^{ - 2 \, i \, (s-t) \,d \, \llg} \, a_d(s,t,\llg),
\end{eqnarray*}
where $a_c$ and $a_d$ are classical symbol of order $-k-1$ when $s \neq t$, with the leading terms respectively
$$a_{c,-k-1}(s,t,\llg) = \frac{h^{(k)}(c)}{[2\, i \,(s-t) \llg]^{(k +1)}},\quad a_{d,-k-1}(x,y,\llg) = \frac{- h^{(k)}(d)}{[2\, i \,(s-t) \llg]^{(k +1)}}.$$
\end{itemize}
\end{lemma}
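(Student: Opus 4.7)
The approach is repeated integration by parts in $\tau$, exploiting that the phase $-2i(s-t)\tau\lambda$ is linear in $\tau$ and therefore admits the explicit antiderivative $e^{-2i(s-t)\tau\lambda}/[-2i(s-t)\lambda]$ whenever $(s-t)\lambda \neq 0$. I describe case~i) in detail; ii) and iii) reduce to it. Setting $\mu = -2i(s-t)\lambda$ and $I_j = \int_c^d h^{(j)}(\tau)\, e^{\mu\tau}\, d\tau$, one integration by parts yields
\begin{equation*}
I_j = \frac{h^{(j)}(d)\, e^{\mu d} - h^{(j)}(c)\, e^{\mu c}}{\mu} - \frac{1}{\mu}\, I_{j+1}.
\end{equation*}
Under the hypothesis of i), the support condition on $h$ kills the boundary term at $\tau=d$ for every $j$, and the $k$-th order vanishing of $h$ at $\tau=c$ kills the boundary term at $\tau=c$ for $j=0,\ldots,k-1$. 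So the first $k$ rounds collapse to $A = I_0 = (-1/\mu)^k I_k$, and the $(k+1)$-st round produces the first nonzero boundary contribution
\begin{equation*}
A(s,t,\lambda) = \frac{(-1)^{k+1}\, h^{(k)}(c)\, e^{\mu c}}{\mu^{k+1}} + \frac{(-1)^{k+1}}{\mu^{k+1}}\, I_{k+1}.
\end{equation*}
Using $\mu^{k+1} = (-1)^{k+1}[2i(s-t)\lambda]^{k+1}$ and factoring out $e^{\mu c} = e^{-2i(s-t)c\lambda}$ puts $A$ in the stated form, with leading coefficient $h^{(k)}(c)/[2i(s-t)\lambda]^{k+1}$.

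Iterating the identity on $I_{k+1}$, $I_{k+2}$, and so on generates the full asymptotic series for $a(s,t,\lambda)$ in descending powers of $\lambda$, each coefficient being a rational expression in $s-t$ with numerator built from the $h^{(\ell)}(c)$'s. Smoothness of $A$ on all of $\rN^3$ is automatic by differentiation under the integral, since $h$ has compact support and the integrand is smooth in $(s,t,\lambda,\tau)$. The symbol estimates required to conclude $a \in S^{-k-1}$ on $\{s \neq t\}$ follow by truncating the expansion at any finite order, bounding the remainder via the iteration above, and noting that every term carries factors of $1/(s-t)$ and $1/\lambda$, so that each application of $\partial_s$, $\partial_t$, or $\partial_\lambda$ lowers the order by one; this is standard bookkeeping once the core identity is in hand.

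Cases ii) and iii) follow by cosmetic modifications. In ii) the surviving boundary term sits at $\tau=d$ and the factor $h^{(k)}(d)\, e^{\mu d}$ carries the opposite sign (since $d$ is the upper limit), which accounts for the $-1$ in the leading coefficient. Case iii) is handled by a partition of unity: pick $\psi_c, \psi_d \in C^\infty[c,d]$ with $\psi_c + \psi_d \equiv 1$, $\supp \psi_c \subset [c,(c+d)/2]$, and $\supp \psi_d \subset [(c+d)/2,d]$; then $A = A[h\psi_c] + A[h\psi_d]$, and applying i) to the first piece and ii) to the second gives the two-term decomposition. I expect the main technical obstacle to be the careful verification of the uniform symbol bounds on compact subsets of $\{s \neq t\}$; this is an inductive argument which, although essentially mechanical, must be executed with some care to track the dependence on $1/(s-t)$ as it interacts with differentiation in $s$ and $t$.
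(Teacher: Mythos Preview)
Your proof is correct and follows essentially the same route as the paper: repeated integration by parts in $\tau$, using the support and vanishing hypotheses to kill the early boundary terms, then reducing case~iii) to i) and ii) by splitting $h = h_c + h_d$ via a smooth partition of unity. The only cosmetic slip is that your stated supports $[c,(c+d)/2]$ and $[(c+d)/2,d]$ for $\psi_c,\psi_d$ cannot overlap and hence cannot smoothly sum to $1$; widen them so they overlap on an interior interval, which is clearly what you intend.
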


\begin{proof} The smoothness of $A$ is obvious. It remains to prove i), ii), and iii).

\medskip

We only need to prove i). The proof of ii) is similar. The proof of iii) can be reduced to those of i) and ii) by decomposing $h = h_c + h_d$, where $h_c,h_d$ satisfy the condition of $h$ in i) and ii), respectively.

\medskip

Let us proceed to prove i). We have, using integration by parts, for $s \neq t$
\begin{eqnarray*} A (s,t,\llg) &=& \frac{1}{-2 i \,(s-t) \llg }  \intl_{c}^d \big[e^{- 2 \, i \, (s-t) \, \tau\, \llg}\big]_\tau \, \, h(\tau) \, d\tau
\\ &=& \frac{1}{-2 i \,(s-t) \llg } e^{- 2 \, i \, (s-t) \, \tau\, \llg}  \, h(\tau)\Big|_{c}^{d} \\ &-& \frac{1}{-2 i \,(s-t) \llg } \intl_{c}^d e^{- 2 \, i \, (s-t) \, \tau\, \llg} \, h'(\tau) \, d\tau
.\end{eqnarray*}
Since $h(\tau) = 0$ near $\tau =d$, we obtain
\begin{eqnarray*} A(s,t,\llg) &=& \frac{h(c)}{2 i \,(s-t) \llg } e^{- 2 \, i \, (s-t) \,c\, \llg} \\ &+& \frac{1}{2 i \,(s-t) \llg } \intl_{c}^d e^{- 2 \, i \, (s-t) \, \tau\, \llg} \, h'(\tau) \, d\tau
.\end{eqnarray*}
Continuing the integration parts, we arrive to
\begin{eqnarray*} A (s,t,\llg) &=& \sum_{l=0}^{k+1} \frac{h^{(l)}(c)}{[2 i \,(s-t) \llg]^{l+1} } e^{- 2 \, i \, (s-t) \,c\, \llg} \\ &+& \frac{1}{[2 i \,(s-t) \llg]^{k+2} } \intl_{c}^d e^{- 2 \, i \, (s-t) \, \tau\, \llg} \, h^{(k+2)}(\tau) \, d\tau
.\end{eqnarray*}
Since, $h^{(l)}(c) =0$ for $0 \leq l \leq k-1$,
\begin{eqnarray*} A(s,t,\llg)  &=&e^{- 2 \, i \, (s-t) \,c\, \llg} \, \Big\{\frac{h^{(k)}(c)}{[2 i \,(s-t) \llg]^{k+1} }  + \frac{h^{(k+1)}(c)}{[2 i \,(s-t) \llg]^{k+2} }  \\ &+& \frac{1}{[2 i \,(s-t) \llg]^{k+2} } \intl_{c}^d e^{- 2 \, i \, (s-t) \, (\tau-c) \, \llg} \, h^{(k+2)}_c(\tau) \, d\tau\Big\}.\end{eqnarray*}
Using similar integration by parts, we can easily prove that $$\intl_{c}^d e^{- 2 \, i \, (s-t) \, (\tau-c) \, \llg} \, h^{(k+2)}(\tau) \, d\tau$$ is a classical symbol of order $0$ when $s \neq t$. This finishes our proof.

\end{proof}

\subsection{Calculus for wave front sets} \label{A:Cal}
We extract here some rules for calculus of wave front sets (see, e.g., \cite{hormander71fourier,Ho1}):
\begin{itemize}
\item[1)] {\bf Propagation of wave front set under linear map:} Let $\mT$ be the linear transformation defined by the Schwartz kernel $\mu \in \mD'(X \times Y)$ satisfying $\wf(\mu)' \subset (\cT^* X \setminus 0) \times (\cT^* Y \setminus 0)$. Then
$$\wf(\mT u) \subset \wf(\mu)' \circ \wf(u).$$ 
This rule is used in Discussions~1,2,3 and 4 to explain the reconstruction of singularities (due to the wavefront set of $\mu$ on $\Delta$) and generation of singularities (due the the wavefront set of $\mu$ on other Lagrangian manifolds).

\medskip

\item[2)] {\bf Product rule:} Let $u,v$ be two distributions on the same space $X$. Then the product $uv$ is well defined unless $(x,\xi) \in \wf(u)$ and $(x,-\xi) \in \wf(v)$ for some $(x,\xi)$. Moreover,
$$\wf(uv) \subset \{(x,\xi+\eta): (x,\xi) \in \wf(u) \mbox{ or } \xi=0, (x,\eta) \in \wf(v) \mbox{ or } \eta =0\}. $$
This rule is used in the proof of Propositions \ref{P:wavefront} and \ref{P:wavefront-3}. 

\medskip

\item[3)] {\bf Composition rule:} Let $\mT_1, \mT_2$ be linear transformations defined by the Schwartz kernels $\mu_1 \in \mD'(X \times Y)$, $\mu_2 \in \mD'(Y \times Z)$ satisfying $\wf(\mu_1) \subset (\cT^* X \setminus 0) \times (\cT^* Y \setminus 0)$ and $\wf(\mu_2) \subset (\cT^* Y \setminus 0) \times (\cT^* Z \setminus 0)$. Let $\mu$ be the Schwartz kernel of $\mT_1 \circ \mT_2$, then
$$\wf(\mu)' \subset \wf(\mu_1)' \circ \wf(\mu_2)' .$$
In particular, if $\wf(\mu_1)=-\wf(\mu_1)$ and $\wf(\mu_2) = -\wf(\mu_2)$, then
$$\wf(\mu)' \subset \wf(\mu_1) \circ \wf(\mu_2).$$
This rule is used in the proof of Propositions \ref{P:wavefront} and \ref{P:wavefront-3}. 
\end{itemize}

\subsection{Related Fourier Distributions} \label{A:FIO}
In this section, we only introduce some special Fourier distributions which are needed in this article. The reader is referred to, e.g., \cite{hormander71fourier,TrFour} for the general theory of the topic. Let $\Og \subset \rN^n$ be an open set and $\phi=\phi(x,y,\llg) \in C^\infty(\Og \times \Og \times (\rN^N \setminus 0))$ satisfy \begin{itemize} \item[1)] $\phi$ is homogeneous of degree $1$ in $\llg$, \item[2)] $\phi_x \neq 0$ and $\phi_y\neq 0$ on the set $$\mC=\{(x,y,\llg) \in \Og \times \Og \times (\rN^N \setminus 0): d_\llg\phi =0\}.$$
\end{itemize}
Such as function $\phi$ is called an {\bf operator phase function}. Let us define $\Llg \subset (\cT^* \Og \setminus 0) \times (\cT^* \Og \setminus 0)$ by \begin{equation*} \label{De:can} \Llg= \{(x,d_x \phi; y, - d_y \phi): (x,y,\llg) \in \mC \}. \end{equation*}
Then, $\Llg$ is called the homogeneous {\bf canonical relation} associated to $\phi$.

\medskip

Let us define $\mu \in \mD'(\Og \times \Og)$ by
$$\mu(x,y) = \intl_{\rN^N} e^{i \phi(x,y,\llg)} \, a(x,y,\llg) \,d\llg,$$
where $a(x,y,\llg) \in S^{m+(n-N)/2} (\Og \times \Og \times \rN^N)$ \footnote{The reader is referred to \cite{hormander71fourier,TrFour} for the definition of the symbol class $S^p(\Og \times \Og \times \rN^N)$. The order $p=m+(n-N)/2$, required in the above definition, is specified in, e.g., \cite[pp. 115]{hormander71fourier}.}. Then, $\mu$ is a Fourier distribution of order $m$ whose canonical relation is $\Llg$. We write $\mu \in I^{m}(\Llg)$. The linear operator $\mT: \mD(\Og) \to \mD'(\Og)$ whose Schwartz kernel is $\mu$ is a Fourier integral operator (FIO) of order $m$. With a slight abuse of notation, we also write $\mT \in I^{m}(\Llg)$.

\medskip

The following result (see \cite[Theorem 4.3.2]{hormander71fourier}) will be used to analyze the mapping properties of the FIOs discussed below: 

\begin{theorem} \label{T:Ho}
Let $\Llg \subset (\cT^* \Og \setminus 0) \times (\cT^* \Og \setminus 0)$ be a homogeneous canonical relation such that both of its left and right projections on $\Og$ have surjective differentials. Assume that the differentials of the left and right projections $\Llg \to \cT^* \Og$ have rank at least $k + n$. Then, every $\mT \in I^m(\Llg)$ maps continuously from $H^s_{comp}(\Og)$ to $H_{loc}^{s- m - \frac{n-k}{2}}(\Og)$. 
\end{theorem}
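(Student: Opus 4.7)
The plan is to prove this continuity statement via the $TT^*$ method combined with the clean-intersection calculus for Fourier integral operators. First, I would reduce to the case $s=0$: composing $\mT$ on the left by an elliptic pseudo-differential operator of order $-m-(n-k)/2$ preserves the canonical relation $\Llg$ and shifts the order appropriately, so it suffices to show that $\mT$ maps $L^2_{comp}(\Og)$ continuously into $H^{-m-(n-k)/2}_{loc}(\Og)$. A microlocal partition of unity further allows me to assume $a(x,y,\llg)$ is supported in a small conic neighborhood of a fixed point of $\mC$ and to choose local coordinates on $\Llg$ adapted to the projections $\pi_L$ and $\pi_R$.

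The heart of the argument is the analysis of $\mT\mT^*$, whose Schwartz kernel is the oscillatory integral
\[
K(x,x') = \int e^{i(\phi(x,y,\llg)-\phi(x',y,\mu))} a(x,y,\llg)\,\overline{a(x',y,\mu)}\, dy\, d\llg\, d\mu.
\]
The rank hypothesis on $\pi_L$ and $\pi_R$ translates into the statement that the composition $\Llg\circ\Llg^t$ meets the diagonal in $\cT^*\Og\times\cT^*\Og$ cleanly, in the sense of Bott, with excess $e=n-k$; equivalently, the critical manifold of this phase in the $(y,\llg,\mu)$ variables is a smooth submanifold of the expected codimension with non-degenerate normal Hessian. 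Applying the generalized method of stationary phase along this non-isolated critical manifold yields that $\mT\mT^*$ is, modulo a smoothing operator, a classical pseudo-differential operator of order $2m+(n-k)/2$.

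To conclude, set $A=\langle D\rangle^{-m-(n-k)/2}$; then $A\mT\mT^*A^*$ is a pseudo-differential operator of order $-(n-k)/2\le 0$, hence bounded on $L^2$, and the identity $\|A\mT u\|_{L^2}^2=\langle A\mT\mT^*A^* u,u\rangle$ gives boundedness of $A\mT:L^2_{comp}\to L^2_{loc}$, which is exactly the claimed mapping property of $\mT$. The main obstacle is the clean-intersection step: one must verify from the rank conditions on $\pi_L$ and $\pi_R$ that $\Llg\circ\Llg^t$ intersects the diagonal cleanly (not merely transversally) with excess precisely $n-k$, and then carry out the generalized stationary phase formula along a positive-dimensional critical manifold. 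The factor $e/2=(n-k)/2$ that enters the symbol order is exactly what produces the fractional loss $(n-k)/2$ in Sobolev regularity.
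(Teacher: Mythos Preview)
The paper does not give its own proof of this theorem: it is quoted verbatim as \cite[Theorem~4.3.2]{hormander71fourier} and used as a black box to deduce Lemmas~\ref{L:x0} and~\ref{L:ma}. So there is no ``paper's proof'' to compare your attempt with; what matters is whether your sketch stands on its own.

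There is a genuine gap. Your clean-intersection computation of the excess $e=n-k$ is fine, but the conclusion you draw from it is not: clean composition only gives $\mT\mT^* \in I^{2m+e/2}(\Llg\circ\Llg^t)$, and nothing in the hypotheses forces $\Llg\circ\Llg^t$ to be (contained in) the diagonal. In the very examples the paper cares about this fails outright. For $\Llg=\Llg_{x_0}$ (rotations about a point), one has $\Llg_{x_0}\circ\Llg_{x_0}^t=\Llg_{x_0}$, so $\mT\mT^*$ is again an FIO associated to $\Llg_{x_0}$, not a pseudo-differential operator, and your $L^2$-boundedness step $\|A\mT u\|^2=\langle A\mT\mT^*A^*u,u\rangle$ never closes: you are left with the same mapping problem you started with, at a shifted order. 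Iterating does not help for the same reason.

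H\"ormander's original argument in \cite{hormander71fourier} does not go through $\mT\mT^*$; it localizes, uses the rank hypothesis on $d\pi_L,d\pi_R$ to put the phase in a normal form in which $n-k$ of the base variables enter only as parameters, and then estimates the resulting oscillatory kernel directly (essentially reducing to the non-degenerate $L^2$-theorem in $k$ variables together with an integration over an $(n-k)$-dimensional family). If you want a $\mT\mT^*$-style proof, you must either impose that $\pi_L$ (or $\pi_R$) is injective so that $\Llg\circ\Llg^t\subset\Delta$, which is a strictly stronger assumption than the one stated, or factor $\mT$ locally as an elliptic graph-FIO composed with a model degenerate operator (a fiberwise averaging over an $(n-k)$-dimensional manifold) and treat the latter by hand.
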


\medskip

In the below discussion, we introduce two classes of Fourier distributions whose canonical is defined by rotations around a point or a line, respectively.

\medskip

\noindent{\bf Fourier distributions associated to rotations around a point.} Let us now introduce the class of Fourier distributions whose canonical relation is defined by the rotations around a point. This class of Fourier distributions is used in the statement and proof of Theorem~\ref{T:Main1}~b) (namely, the class $I^m(\Llg_\pm)$) and Theorem~\ref{T:Main2}~b) (the class $I^m(\Llg_j)$, $j=1,\dots,4$). 

\medskip

Let $x_0 \in \rN^n$ such that $x_0 \not \in \Og$. We define
\begin{equation} \label{E:FIO} \mu_{x_0}(x,y) =  \intl_{\rN} e^{i (|x-x_0|^2-|y-x_0|^2) \, \llg} a(x,y,\llg)  d\llg,\end{equation} where $a \in S^{m+\frac{n-1}{2}}(\Og \times \Og \times \rN)$. 
Then $\mu_{x_0} \in I^m(\Llg_{x_0})$ where  $$\Llg_{x_0}=\{(x,\tau (x-x_0);y, \tau(y-y_0)) \in (\cT^* \Og \setminus 0) \times (\cT^* \Og \setminus 0): |x-x_0| = |y-x_0| \} $$ is the canonical relation defined by the rotations around $x_0$. 

\medskip

The following result, which is a direct consequence of Theorem~\ref{T:Ho}, is used in Discussions~2~\&~4:
\begin{lemma} \label{L:x0}
Let $\mT$ be the linear operator whose Schwartz is $\mu_{x_0}$. Then, $\mT$ maps continuously from $H^s_{comp}(\Og)$ to $H^{s-m-\frac{n-1}{2}}_{loc} (\Og)$.
\end{lemma}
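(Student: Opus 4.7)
My plan is to derive the claim as a direct application of \reftheo{T:Ho} to the canonical relation $\Llg_{x_0}$. The first step is to confirm that $\mu_{x_0}$, given by the oscillatory integral \eqref{E:FIO}, belongs to $I^m(\Llg_{x_0})$. I take $\phi(x,y,\llg) = (|x-x_0|^2 - |y-x_0|^2)\llg$ as the operator phase function with $N=1$ phase variable. The critical set $d_\llg \phi = 0$ reduces to $|x-x_0| = |y-x_0|$. On this set the derivatives $d_x \phi = 2\llg(x-x_0)$ and $-d_y\phi = 2\llg(y-x_0)$ are both nonzero, using $\llg \neq 0$ and the hypothesis $x_0 \notin \Og$. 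The associated canonical relation becomes $\{(x,\tau(x-x_0); y, \tau(y-x_0)) : |x-x_0| = |y-x_0|\}$ after the reparametrization $\tau = 2\llg$, which coincides with $\Llg_{x_0}$. Since $a \in S^{m+(n-1)/2}$ and the order convention gives $m + (n-N)/2 = m + (n-1)/2$ when $N=1$, we conclude $\mu_{x_0} \in I^m(\Llg_{x_0})$.

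Next I verify the two hypotheses of \reftheo{T:Ho}. The manifold $\Llg_{x_0}$ has dimension $2n$ and admits a local parametrization $(y, \tau, \og) \in \Og \times (\rN \setminus 0) \times \uS^{n-1}$ via $x = x_0 + |y-x_0|\,\og$. For the left projection $\Llg_{x_0} \to \Og$ sending the element to $x$, moving in the $\og$-directions yields tangent vectors to the sphere at $x$, while varying $y$ in the radial direction with respect to $x_0$ produces the radial motion of $x$; together these span $T_x\Og$, so the differential is surjective. The projection to $y$ is trivially surjective. For the right projection $\pi_R : \Llg_{x_0} \to \cT^* \Og$ sending the element to $(y, \tau(y-x_0))$, I observe that $\tau$ is determined by $\eta = \tau(y-x_0)$ and $y$, so the preimage of $(y,\eta)$ consists exactly of those $x$ with $|x-x_0| = |y-x_0|$, i.e., a sphere of dimension $n-1$. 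Consequently $d\pi_R$ has constant rank $2n - (n-1) = n+1$, and the same analysis applies to the left projection.

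Finally I invoke \reftheo{T:Ho} with $k = 1$, since the projection rank $n+1$ equals $k + n$, to conclude that $\mT$ maps $H^s_{comp}(\Og)$ continuously to $H^{s-m-(n-1)/2}_{loc}(\Og)$, as claimed. The only conceptual point requiring attention is the fiber-dimension computation, whose essence is that the fibers of both projections are orbits of the rotation group acting around $x_0$, hence $(n-1)$-spheres. Once this is identified, the remaining work is a routine invocation of the general theorem.
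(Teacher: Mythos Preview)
Your proof is correct and follows exactly the same approach as the paper: apply \reftheo{T:Ho} with $k=1$. The paper's own proof is the single sentence ``We only need to apply Theorem~\ref{T:Ho} with $k=1$,'' so you have simply supplied the verifications (that $\mu_{x_0}\in I^m(\Llg_{x_0})$, that the base projections are submersions, and that the cotangent projections have rank $n+1$ because the fibers are $(n-1)$-spheres) that the paper leaves implicit.
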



\begin{proof}
We only need to apply Theorem~\ref{T:Ho} with $k=1$. 
\end{proof}

\medskip

\noindent{\bf Fourier distributions associated to rotations around a line.} Let us now only consider $\Og \subset \rN^3$. We introduce a class of Fourier distributions, whose canonical relation is defined by the rotations around a straight line. This class of Fourier distributions appears in the statement and proof of Theorem~\ref{T:Main2}~c) (the class $I^m(\Llg_j)$, $j=5,\dots, 8$). For notational ease, given each $x = (x_1,x_2,x_3) \in \rN^3$, we will write $x'=(x_1,x_2)$ and $x''=(x_1,x_3)$. 

\medskip

Let $\ma=(a_1,a_2) \in \rN^2$ such that the vertical line 
 $$\ell_{v,\ma} :=\{z \in \rN^3: z'=\ma\}= \{\ma\} \times \rN$$
does not intersect $\Og$. We define the Fourier distribution $\mu_\ma \in \mD'(\Og \times \Og)$ by
\begin{eqnarray*}\mu_{\ma} (x,y) =  \intl_{\rN^2} e^{i \big[( |x'-\ma|^2 - |y'-\ma|^2)\, \xi_1 +2 (x_3 -y_3) \xi_2 - (x_3-y_3)^2 \, \xi_1 \big] }  \, a(x,y,\xi) \, d\xi,\end{eqnarray*}
where $a(x,y,\xi) \in S^{m+\frac{1}{2}}(\Og \times \Og \times \rN^2)$. Then $\mu_\ma \in I^{m}(\Llg_\ma)$, where
\begin{multline*}
\Llg_\ma = \{(x, \tau \, (x-z); \, y, \tau \, (y-z)) \in (\cT^*\Og \setminus 0) \times (\cT^*\Og \setminus 0): x_3=y_3, |x-z| = |y-z|, z\in \ell_{v,\ma}\}
\end{multline*} is the canonical relation defined by the rotations around the vertical line $\ell_{v,\ma}$.

\medskip

Similarly, let $\mb=(b_1,b_3) \in \rN^2$ such that the horizontal line  
 $$\ell_{h,\mb} :=\{z \in \rN^3: z''=\mb\}= \{b_1\} \times \rN \times \{b_3\}$$
does not intersect $\Og$. We define 
\begin{eqnarray*}\mu_{\mb} (x,y) =  \intl_{\rN^2} e^{i \big[( |x''-\mb|^2 - |y''-\mb|^2)\, \xi_1 +2 (x_2 -y_2) \xi_2 - (x_2-y_2)^2 \, \xi_1 \big] }  \, a(x,y,\xi) \, d\xi,\end{eqnarray*}
where $a(x,y,\xi) \in S^{m+\frac{1}{2}}(\Og \times \Og \times \rN^2)$. Then $\mu_\mb \in I^{m}(\Llg_\mb)$, where
\begin{multline*}
\Llg_\mb = \{(x, \tau \, (x-z); \, y, \tau \, (y-z)) \in (\cT^*\Og \setminus 0) \times (\cT^*\Og \setminus 0): x_2=y_2, |x-z| = |y-z|, z\in \ell_{h,\mb}\}
\end{multline*}
is the canonical relation defined by the rotations around the horizontal line $\ell_{h,\mb}$.

\medskip

The following lemma, which is a direct consequences of Theorem~\ref{T:Ho}, is used to analyze the strength of the edge artifacts in three dimensional problem (see Discussion~4):

\begin{lemma} \label{L:ma}
Let $\mT$ be the linear operator whose Schwartz is $\mu_{\ma}$ or $\mu_\mb$. Then, $\mT$ maps continuously from $H^s_{comp}(\Og)$ to $H^{s-m-\frac{1}{2}}_{loc} (\Og)$.
\end{lemma}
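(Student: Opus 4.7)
The plan is to invoke Theorem~\ref{T:Ho} directly, with $n = 3$ and $k = 2$, since $\frac{n-k}{2} = \frac{1}{2}$ matches the stated Sobolev loss. Because $\mu_\mb$ is obtained from $\mu_\ma$ by interchanging the roles of the second and third spatial coordinates, I will describe the verification for $\Llg_\ma$ only; the case of $\Llg_\mb$ follows by the analogous argument.

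The first step is to introduce the parameterization of $\Llg_\ma$ by $(y, \phi, z_3, \tau)$, where $y \in \Og$, $z = (\ma, z_3) \in \ell_{v,\ma}$, $\tau \neq 0$, and $\phi$ is the angle of rotation around $\ell_{v,\ma}$ taking $y$ to the (uniquely determined) point $x$ appearing in the corresponding element $(x, \tau(x-z); y, \tau(y-z))$ of $\Llg_\ma$. The hypothesis $\ell_{v,\ma} \cap \Og = \emptyset$, which is built into the definition of $\mu_\ma$, ensures $y' - \ma \neq 0$, so this parameterization is smooth and covers $\Llg_\ma$. In these coordinates the right projection to $\Og$ is $(y, \phi, z_3, \tau) \mapsto y$ and the left projection is $(y, \phi, z_3, \tau) \mapsto R_\phi(y)$, where $R_\phi$ denotes the rotation around $\ell_{v,\ma}$; both are manifestly submersions onto $\Og$, so the first hypothesis of Theorem~\ref{T:Ho} holds.

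The second step, which is the main obstacle, is to compute the rank of the differential of the right projection $\Llg_\ma \to \cT^*\Og$, $(y, \phi, z_3, \tau) \mapsto (y, \tau(y-z))$. A direct inspection of the $6 \times 6$ Jacobian shows that $\partial_\phi$ lies in the kernel (the $\phi$-rotation affects only $x$, not $y$ or $z$), while the five remaining partials $\partial_{y_1}, \partial_{y_2}, \partial_{y_3}, \partial_{z_3}, \partial_\tau$ span a $5$-dimensional image; the independence of $\partial_{z_3}$ and $\partial_\tau$ modulo the $\partial_{y_j}$'s relies precisely on $(y_1 - a_1, y_2 - a_2) \neq (0,0)$, which holds throughout $\Og$. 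Hence the rank equals $5 = k + n$, and by symmetry the left projection has the same rank. Applying Theorem~\ref{T:Ho} with $k = 2$ then yields the stated continuity $\mT : H^s_{\mathrm{comp}}(\Og) \to H^{s - m - 1/2}_{\mathrm{loc}}(\Og)$ for every $\mT \in I^m(\Llg_\ma)$; the analogous rank computation for $\Llg_\mb$, which relies on $(y_1 - b_1, y_3 - b_3) \neq (0,0)$, finishes the proof.
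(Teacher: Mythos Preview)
Your proof is correct and follows exactly the approach indicated in the paper, which simply invokes Theorem~\ref{T:Ho} with $n=3$ and $k=2$; you have additionally supplied the verification of the rank hypothesis (rank $=5$) that the paper leaves implicit. The key observation --- that $(y_1-a_1,y_2-a_2)\neq(0,0)$ because $\ell_{v,\ma}\cap\Og=\emptyset$ --- is precisely what makes $\partial_\tau$ independent of $\partial_{z_3}$ modulo the $\partial_{y_j}$'s, and your computation of this is accurate.
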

\begin{proof}
We only need to apply Theorem~\ref{T:Ho} with $n=3$ and $k=2$. 
\end{proof}

\newcommand{\etalchar}[1]{$^{#1}$}
\def\dbar{\leavevmode\hbox to 0pt{\hskip.2ex \accent"16\hss}d}

\end{document}